\newtheorem{thm}{Theorem}[section]
\newtheorem{cor}[thm]{Corollary}
\newtheorem{lemma}[thm]{Lemma}
\newtheorem{prop}[thm]{Proposition}
\newtheorem{rem}[thm]{Remark}
\newtheorem{defi}[thm]{Definition}
\theoremstyle{definition}
\newtheorem{exa}[thm]{Example}
\newcommand{\setone}{\mathds{1}}
\newcommand{\coloneqq}{\colonequals}
\newcommand{\eps}{\varepsilon}
\newcommand{\apply}[3][]{\left<#2,#3\right>_{#1}}
\newcommand{\scalar}[3][]{\left(#2|#3\right)_{#1}}
\newcommand{\dx}{\mathrm{d}}
\renewcommand{\phi}{\varphi}
\renewcommand{\rho}{\varrho}
\DeclareMathOperator*{\esssup}{ess\,sup}
\title[Representations of Operators between Bochner spaces]{Properties of Representations of Operators acting between Spaces of Vector-Valued Functions}
\date{March 12, 2009}
\author[D.\ Mugnolo]{Delio Mugnolo}
\address{Delio Mugnolo\\Institute of Analysis\\University of Ulm\\89069 Ulm\\Germany}
\email{delio.mugnolo@uni-ulm.de}
\author[R.\ Nittka]{Robin Nittka}
\address{Robin Nittka\\Institute of Applied Analysis\\University of Ulm\\89069 Ulm\\Germany}
\email{robin.nittka@uni-ulm.de}
\keywords{vector measures, representation of operators, tensor products, multiplication operators,
Banach lattices, positive operators, p-tensor product, regular operators}
\subjclass[2000]{46G10, 47B34, 46M10, 47B65}
\begin{document}
\begin{abstract}
	A well-known result going back to the 1930s states that all bounded
	linear operators mapping scalar-valued $L^1$-spaces into $L^\infty$-spaces are
	kernel operators and that in fact this relation induces an isometric isomorphism
	between the space of such operators and the space of all bounded kernels.
	We extend this result to the case of spaces of vector-valued functions.

	A recent result due to Arendt and Thomaschewski states that the local operators
	acting on $L^p$-spaces of functions with values in separable Banach spaces are precisely
	the multiplication operators.
	We extend this result to non-separable dual spaces.

	Moreover, we relate positivity and other order properties of the
	operators to corresponding properties of the representations.
\end{abstract}
\maketitle

\section{Introduction}
In the theory of partial differential equations one often proves the existence
of solution operators with nice properties without arriving at an explicit formula
to represent them.
In such situations it often helps to have some abstract representation theorems
for linear operators at hand which guarantee that the solution can be expressed
by a formula having a simple structure.
Most prominent among these results are criteria to distinguish operators allowing
a kernel representation, leading to the existence of a so-called Green's function.

The following representation theorem can be traced back at least to
Gelfand~\cite{Gel38} and Kantorovitch and Vulikh~\cite{KanVul37},
see also~\cite[Theorem~2.2.5]{DunPet40}.

\begin{thm}\label{Repr}
	Let $(\Omega_1, \mu_1)$ and $(\Omega_2, \mu_2)$ be $\sigma$-finite measure spaces.
	There is a one-to-one correspondence between the bounded linear operators $T$
	from $L^1(\Omega_1)$ to $L^\infty(\Omega_2)$
	and the bounded kernels $k \in L^\infty(\Omega_1 \times \Omega_2)$.
	More precisely, for every $k \in L^\infty(\Omega_1 \times \Omega_2)$
	\begin{equation}\label{scalarrepform}
		(T_kf) \coloneqq \int_{\Omega_1} k(\omega,\cdot)f(\omega) \, d\mu_1(\omega),\qquad f\in L^1(\Omega_1),
	\end{equation}
	defines a bounded linear operator from $L^1(\Omega_1)$ to $L^\infty(\Omega_2)$.
	Conversely, every bounded linear operator from $L^1(\Omega_1)$ to $L^\infty(\Omega_2)$
	admits such a kernel representation.
\end{thm}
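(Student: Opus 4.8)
The plan is to establish the two norm estimates $\|T_k\| \le \|k\|_\infty$ and $\|k\|_\infty \le \|T\|$; since the kernel constructed from a given $T$ will satisfy $T = T_k$, these bounds together show that $k \mapsto T_k$ is an isometry, which yields injectivity and surjectivity at once. The easy direction comes first: for $k \in L^\infty(\Omega_1\times\Omega_2)$ and $f \in L^1(\Omega_1)$, joint measurability of $k$ together with Fubini's theorem ensures that $\omega_2 \mapsto \int_{\Omega_1} k(\omega_1,\omega_2)f(\omega_1)\,d\mu_1(\omega_1)$ is defined for $\mu_2$-a.e.\ $\omega_2$ and measurable, and the pointwise bound $|(T_kf)(\omega_2)| \le \|k\|_\infty\|f\|_1$ shows $T_kf \in L^\infty(\Omega_2)$ with $\|T_k\| \le \|k\|_\infty$. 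Linearity of $T_k$ is clear.

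The substance is the converse: reconstructing a kernel from an arbitrary bounded $T$. I would define a set function on measurable rectangles of finite measure by $\Phi(A\times B) \coloneqq \int_B (T\setone_A)\,d\mu_2$. Linearity of $T$ makes $\Phi$ additive separately in $A$ and in $B$, and the estimate $\|T\setone_A\|_\infty \le \|T\|\,\mu_1(A)$ gives the domination $|\Phi(A\times B)| \le \|T\|\,(\mu_1\times\mu_2)(A\times B)$. Hence $\Phi$ extends additively to the algebra generated by such rectangles, still bounded there by $\|T\|(\mu_1\times\mu_2)$.

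The main obstacle is upgrading this finitely additive, product-measure-dominated set function to a genuine countably additive measure on the product $\sigma$-algebra $\Sigma_1\otimes\Sigma_2$. The domination is precisely what forces this: for a disjoint sequence $(E_n)$ in the algebra with union again in the algebra, the tail satisfies $|\Phi(\bigsqcup_{n>N}E_n)| \le \|T\|\,(\mu_1\times\mu_2)(\bigsqcup_{n>N}E_n) \to 0$ by countable additivity of the $\sigma$-finite product measure, so $\Phi$ is countably additive on the algebra and the Hahn--Kolmogorov extension theorem applies; the extension remains dominated by $\|T\|(\mu_1\times\mu_2)$, hence absolutely continuous and of $\sigma$-finite variation. (If one prefers to avoid signed measures of large variation, one first restricts to an increasing exhaustion by sets of finite measure and patches the resulting pieces.) Radon--Nikodym now produces $k$ with $\Phi(E) = \int_E k\,d(\mu_1\times\mu_2)$, and the uniform bound $|\int_E k| \le \|T\|(\mu_1\times\mu_2)(E)$ for all $E$ forces $\|k\|_\infty \le \|T\|$, so $k \in L^\infty(\Omega_1\times\Omega_2)$.

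It remains to match $T$ with $T_k$. Testing the identity $\Phi(A\times B) = \int_{A\times B} k\,d(\mu_1\times\mu_2)$ against all finite-measure $B$ and applying Fubini gives $T\setone_A = T_k\setone_A$ in $L^\infty(\Omega_2)$ for every $A$ of finite measure; by linearity the two operators agree on the integrable simple functions, which are dense in $L^1(\Omega_1)$, and since both are bounded into $L^\infty(\Omega_2)$ they coincide. Then $\|T\| = \|T_k\| \le \|k\|_\infty \le \|T\|$ forces equality throughout, so the correspondence is isometric; injectivity is the special case $\|k-k'\|_\infty = \|T_{k-k'}\|$, which completes the one-to-one correspondence. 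I would also record the slicker but less elementary route that identifies the bounded bilinear form $(f,g) \mapsto \int_{\Omega_2}(Tf)g\,d\mu_2$ on $L^1(\Omega_1)\times L^1(\Omega_2)$ with an element of $\bigl(L^1(\Omega_1)\widehat{\otimes}_\pi L^1(\Omega_2)\bigr)^\ast = L^\infty(\Omega_1\times\Omega_2)$, the projective tensor product of two $L^1$-spaces being the $L^1$-space of the product.
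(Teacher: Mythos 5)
Your proposal is correct, but it takes a genuinely different route from the paper's. The paper proves Theorem~\ref{Repr} by the abstract duality chain $\mathscr{L}(L^1(\Omega_1),L^\infty(\Omega_2)) \cong (L^1(\Omega_1)\tilde{\otimes}_\pi L^1(\Omega_2))' \cong L^1(\Omega_1\times\Omega_2)' \cong L^\infty(\Omega_1\times\Omega_2)$ --- precisely the ``slicker route'' you record as an afterthought in your final sentence --- and then observes that tracking the identifications yields the kernel formula; it cites~\cite{AreBuk94} for a more elementary alternative, which is essentially what you carry out in full. Your construction (the finitely additive set function $\Phi(A\times B)=\int_B T\setone_A\,\dx\mu_2$ on rectangles, countable additivity forced by the domination $|\Phi|\le\|T\|\,(\mu_1\times\mu_2)$, Hahn--Kolmogorov extension after an exhaustion by finite-measure pieces, Radon--Nikodym, then the matching step via density of simple functions) is self-contained and makes visible exactly where $\sigma$-finiteness enters, namely in the vanishing of the tails, in Radon--Nikodym, and in the patching; the price is the routine bookkeeping of well-definedness on the rectangle algebra and the signed-premeasure extension, which you correctly flag. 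One small point worth making explicit: to get $\|k\|_\infty=\|T_k\|$, and hence injectivity, for an \emph{arbitrary} kernel $k$ rather than only for kernels produced by your construction, you need that $k$ is determined almost everywhere by $T_k$; this is immediate from your own matching computation, since $\int_{A\times B}k\,\dx(\mu_1\times\mu_2)=\int_B (T_k\setone_A)\,\dx\mu_2$ for all finite-measure rectangles, and rectangles generate the product $\sigma$-algebra (a $\pi$-system uniqueness argument), so $T_k=T_{k'}$ forces $k=k'$ a.e. What the tensor-product route buys, and the reason the paper adopts it, is generality: the same chain of identifications drives the vector-valued Theorem~\ref{repr} and the lattice version Theorem~\ref{regkernel}, settings in which a hands-on rectangle-plus-Radon--Nikodym construction would fail or require a lifting argument, since densities with values in a nonseparable dual need not exist in the Bochner sense. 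What your route buys is elementarity and an explicit, constructive kernel with transparent measure-theoretic hypotheses.
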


This theorem is not difficult to prove once a few facts about the
projective tensor product
$E \tilde{\otimes}_\pi F$ of two Banach spaces $E$ and $F$ are known.
We refer to~\cite{defflo93} or to the recent monograph~\cite{diefouswa08} for a comprehensive treatment
of Grothendieck's theory of tensor products.
In fact,
\begin{align*}
	\mathscr{L}(L^1(\Omega_1), L^\infty(\Omega_2))
		& \cong (L^1(\Omega_1) \tilde{\otimes}_\pi L^1(\Omega_2))'
		\cong L^1(\Omega_1 \times \Omega_2)' \\
		& \cong L^\infty(\Omega_1 \times \Omega_2),
\end{align*}
where all spaces are isometrically isomorphic. Tracking down the
identifications, we obtain the existence (and uniqueness) of such a kernel.
But see also~\cite[Theorem~1.3]{AreBuk94} for a more elementary proof.

This result can be applied in the context of evolution
equations. For example, it yields that if the operators of a semigroup satisfy
a certain $L^p(\Omega)$-to-$L^q(\Omega)$ estimate, $p < q$, then the individual
operators can be represented by bounded kernels.
Such estimates can be obtained from Sobolev embeddings.
We refer to~\cite[Chapters~2 and~3]{Davies90} and~\cite[\S 7.3]{ArendtSurvey},
for details and applications.

However, sometimes there occur (possibly infinite) systems of partial
differential equations in a quite natural way, see for
example~\cite{Amann01b,DHP07,Kuch05,vBL05} and the references therein.
For such PDEs whose solution families act on vector-valued
function spaces it is necessary to
generalize scalar results like Theorem~\ref{Repr} to operators
acting between spaces of functions taking values in abstract Banach spaces or,
more generally, in Fr\'echet spaces.
One aim of this note is to work out the technical details for the above result.
We also allow for very general measure spaces.

The so-called \emph{weak Dunford-Pettis theorem},
which we present in Section~\ref{repsec},
will be the crucial step in the proof of our main representation theorem.
Section~\ref{mainsec} contains our first main result, the vector-valued
version of Theorem~\ref{Repr}.
Our ideas can also be used to prove a theorem which is similar to a recent
observation due to Arendt and Thomaschewski~\cite{AreTho05}, and we describe
the details in Section~\ref{multops}.
Section~\ref{possec} treats positivity and other order theoretic questions.
Finally, Section~\ref{lpsec} points towards generalizations of our results
into the direction of other $L^p$-spaces.

\section{Representability of Operators}\label{repsec}

In the literature several results have been dubbed the ``Dunford-Pettis
theorem''. The original paper by Dunford and Pettis~\cite{DunPet40}
investigated (among other things) the \emph{representability}
of operators.
\begin{defi}
Let $(\Omega, \mu)$ be a measure space, $F$ a Banach space
and $T$ a bounded linear operator from $L^1(\Omega, \mu)$ to $F$.
We say that $T$ is \emph{representable} if there exists
a \emph{density} $g \in L^\infty(\Omega, F)$ such that 
\[
	Tf = \int_\Omega fg \, \dx\mu\quad\text{for all }f \in L^1(\Omega,\mu)
\]
as a Bochner integral.
\end{defi}

If $F$ is a separable dual space, then by~\cite[Appendix~C5]{defflo93} each
bounded linear operator $T:L^1(\Omega,\mu)\to F'$ is representable given that
$\mu$ is a finite measure.
Closely connected to this is the following property of Banach spaces.

A Banach space $F$ is said to have the \emph{Radon-Nikodym property} if for every
finite measure space $(\Omega, \mu)$ every bounded linear operator
$T$ from $L^1(\Omega,\mu)$ to $F$ is representable.\footnote{In fact, it is
already sufficient to check this condition for $\Omega = (0,1)$ and the
Lebesgue measure, see~\cite[Appendix~D1]{defflo93}.}
We have already mentioned that separable dual spaces have the Radon-Nikodym
property, and so do all reflexive spaces according to the strong Dunford-Pettis
theorem, see~\cite[Appendix~C7]{defflo93}.

However, even if $T\colon L^1(\Omega) \to F'$ fails to be representable,
it is still possible to find a density in a weaker sense. This is
the content of the \emph{weak Dunford-Pettis theorem}, which we are going to state next.
It arises rather naturally in the context of projective tensor products, see~\cite[\S 3.3]{defflo93}.
We start by introducing a function space that will turn out to be precisely the right space
of densities.

\begin{defi}\label{Linfsigmaast}
	Let $F$ be a Banach space and $(\Omega,\mu)$ be a measure space. Denote by
	$\mathcal{L}_{\sigma^\ast}(\Omega; F')$
	the space of $\sigma(F',F)$-measurable, $\sigma(F',F)$-bounded functions from $\Omega$ to $F'$.
	
	Here $f\colon \Omega \to F'$ is called $\sigma(F',F)$-measurable if the scalar
	function $\omega \mapsto \apply{f(\omega)}{v}$ is measurable for every $v \in F$,
	and it is called $\sigma(F',F)$-bounded if
	\[
		\|f\|_{\sigma^\ast} \coloneqq \sup_{v \in B_F} \esssup_{\omega \in \Omega} |\apply{f(\omega)}{v}| < \infty,
	\]
	where $B_F$ is the unit ball in $F$.
	Elements $f$ and $g$ of $\mathcal{L}_{\sigma^\ast}(\Omega;F')$
	are considered equivalent, i.e., $f \sim g$, if $\|f-g\|_{\sigma^\ast} = 0$.

	Finally, we denote by
	$L^\infty_{\sigma^\ast}(\Omega;F') \coloneqq \mathcal{L}_{\sigma^\ast}(\Omega;F') / \sim$
	the space of equivalence classes with respect to this equivalence relation,
	equipped with the norm $\|[f]_\sim \|_\infty \coloneqq \|f\|_{\sigma^\ast}$.
\end{defi}

It is not hard to check that $\|\cdot\|_{\sigma^\ast}$ is a seminorm on
$\mathcal{L}_{\sigma^\ast}(\Omega;F')$ and that $\|\cdot\|_\infty$ is well-defined
and hence a norm on $L^\infty_{\sigma^\ast}(\Omega;F')$.
We point out that the choice of the predual $F$ of $F'$ is implicit in the notation
$L^\infty_{\sigma^\ast}(\Omega;F')$. However, it will always be clear from the
context which predual we consider.

There is an obvious isometric embedding from $L^\infty_{\sigma^\ast}(\Omega;F')$
into $\mathscr{L}(F,L^\infty(\Omega))$, mapping $g$ to the operator $T_g$ defined by
$T_gv \coloneqq \apply{g}{v}$.
In the proof of Theorem~\ref{DunfordPettis} we will show the remarkable fact
that this map is in fact an isomorphism if $(\Omega,\mu)$ is complete and strictly
localizable in the sense of~\cite[\S 211E]{Frem2}.
This is mainly due to the following deep result, which can be found in~\cite[Theorem~IV.3]{Tulcea69},
see also~\cite[\S 341K and 363X(e)]{Frem3}.

\begin{thm}\label{lifting}
	Let $(\Omega,\mu)$ be a complete measure space.
	Then $(\Omega,\mu)$ is strictly localizable if and only if
	there exists a \emph{linear lifting} from $L^\infty(\Omega)$ to
	$\mathcal{M}^\infty(\Omega)$, the space of bounded measurable
	functions on $\Omega$. 
\end{thm}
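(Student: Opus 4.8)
The plan is to prove both implications, with essentially all of the work lying in the construction of a lifting out of strict localizability (this is the classical Lifting Theorem of von Neumann, Maharam and Ionescu Tulcea); the reverse implication, that the mere existence of a lifting forces strict localizability, is considerably softer. Throughout, by a \emph{linear lifting} I mean a positive linear right inverse $\rho$ of the canonical quotient map $\mathcal{M}^\infty(\Omega)\to L^\infty(\Omega)$ with $\rho 1=1$, so that $\rho f$ is an honest pointwise representative of the class $f$ for every $f\in L^\infty(\Omega)$; in particular $0\le\rho(\chi_E)\le 1$ everywhere whenever $E$ is measurable.

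For the reverse implication, suppose such a $\rho$ exists. Then $\rho$ provides a consistent, pointwise choice of genuine representatives depending only on classes, and I would exploit this through a maximality argument. Using Zorn's lemma I would extract a maximal family $\{\Omega_i\}_{i\in I}$ of pairwise disjoint measurable sets of positive finite measure. The role of the lifting is to force this family to exhaust $\Omega$ up to nothing: if some measurable $A$ had $\mu(A)>0$ yet $\mu(A\cap\Omega_i)=0$ for every $i$, then the representative of $A$ furnished by $\rho$ would let me split off a further non-negligible finite piece genuinely disjoint from all the $\Omega_i$, contradicting maximality. Hence $\{\Omega_i\}$ is a genuine decomposition of $\Omega$ into finite pieces on which measurability and the measure are determined locally, which is exactly strict localizability.

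For the main implication I would first reduce to a probability space. Strict localizability furnishes a decomposition $\Omega=\bigsqcup_{i\in I}\Omega_i$ into sets of finite positive measure such that a function is measurable, respectively its class vanishes, precisely when this holds on each $\Omega_i$; after normalising each $\mu\!\restriction\!\Omega_i$ I may build a lifting $\rho_i$ on $\Omega_i$ separately and then define $\rho f$ by gluing the $\rho_i(f\!\restriction\!\Omega_i)$. Strict localizability is exactly what guarantees that the glued function is again measurable and that $\rho$ is well defined on classes, so it suffices to treat a probability space. There the heart of the matter is the construction of a \emph{lower density} $\phi\colon\Sigma\to\Sigma$ with $\phi(E)=E$ almost everywhere, $\phi(\emptyset)=\emptyset$, $\phi(\Omega)=\Omega$, $\phi(E\cap F)=\phi(E)\cap\phi(F)$, and $\phi$ depending only on the class of $E$. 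From such a $\phi$ one passes to a genuine (indeed multiplicative) lifting by defining $\rho$ first on characteristic functions via $\phi$, extending linearly to simple functions, and then extending by uniform continuity to all of $L^\infty(\Omega)$, checking along the way that the pointwise values land in $\mathcal{M}^\infty(\Omega)$.

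The main obstacle is the construction of the lower density, which is where the real analysis enters. On a countably generated sub-$\sigma$-algebra one obtains $\phi$ from a density argument: writing $\Sigma$ as the increasing limit of finite algebras and applying the martingale convergence theorem to the conditional expectations $\mathbb{E}[\chi_E\mid\Sigma_n]$, one defines $\phi(E)$ as the set of points at which the limit equals $1$, i.e.\ the points of density one. To reach an arbitrary, possibly non-separable $\Sigma$ I would well-order a generating family and extend the lower density one generator at a time by transfinite induction, taking unions at limit ordinals; the delicate extension step of passing from a sub-$\sigma$-algebra to the one generated by adjoining a single set while preserving the intersection property is the technical core of the whole theorem, and verifying multiplicativity of the resulting lifting demands the same care. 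Since this is a classical and lengthy result, in the paper itself I would simply invoke~\cite[Theorem~IV.3]{Tulcea69} rather than reproduce the argument.
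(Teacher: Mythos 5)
Two remarks before the main point. First, the paper contains no proof of this statement at all: Theorem~\ref{lifting} is quoted from the literature, with the work deferred to~\cite[Theorem~IV.3]{Tulcea69} and~\cite[\S 341K, 363X(e)]{Frem3}, so there is no ``paper proof'' to match; you likewise (and sensibly) defer the long construction to the same source. Second, your sketch of the implication ``strictly localizable $\Rightarrow$ lifting'' --- reduction to the pieces of a decomposition, lower density via martingale convergence on countably generated $\sigma$-algebras, transfinite induction along a well-ordered generating family, then upgrading the lower density to a lifting --- is indeed the classical Maharam/Ionescu Tulcea route and is structured correctly. One quibble of emphasis: the hardest stage of the induction is usually taken to be the limit step of countable cofinality (this is where martingale convergence is really needed; at uncountable cofinality every class already occurs at an earlier stage), rather than the successor step of adjoining a single set, and the passage from a lower density to an actual lifting (ultrafilter/Zorn argument, using completeness) is comparatively routine.

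The genuine gap is in your converse. A maximal disjoint family $\{\Omega_i\}_{i\in I}$ of sets of positive finite measure exists in \emph{every} measure space by Zorn's lemma alone, and even granting that its union is conegligible, this falls well short of strict localizability, which in the sense of~\cite[\S 211E]{Frem2} requires the partition to \emph{determine} measurability and measure piecewise: if $E\cap\Omega_i\in\Sigma$ for every $i$, then $E\in\Sigma$ and $\mu(E)=\sum_i\mu(E\cap\Omega_i)$. There exist complete, localizable (in particular semi-finite) measure spaces that are \emph{not} strictly localizable; in such a space your argument would run verbatim and ``prove'' a false conclusion, which shows the lifting must enter in establishing the gluing property, not merely exhaustion of $\Omega$. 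Concretely, the classical argument uses the lifting on the indicator classes: since $\rho$ is positive, linear and unital, for any finite subfamily of the disjoint $\Omega_i$ one has $\sum_k \rho(\chi_{\Omega_{i_k}})=\rho(\chi_{\bigcup_k\Omega_{i_k}})\le 1$ pointwise, so for each $\omega$ at most countably many $i$ satisfy $\rho(\chi_{\Omega_i})(\omega)>0$; one then adjusts the pieces using the sets $\{\omega:\rho(\chi_{\Omega_i})(\omega)=1\}$, and it is this pointwise countability, together with completeness, that permits gluing measurability across uncountably many pieces. Finally, your step ``split off a further non-negligible finite piece'' of a set $A$ with $\mu(A)>0$ tacitly assumes semi-finiteness, which is not available for free and would itself have to be extracted from the existence of the lifting.
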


Recall that a linear lifting is a positive, linear map $\rho$ from
$L^\infty(\Omega;\mathds{R})$ to $\mathcal{M}^\infty(\Omega;\mathds{R})$ such that
$\rho([1]_\sim) = 1$ and $\rho([f]_\sim) \in [f]_\sim$ for every
$[f]_\sim$ in $L^\infty(\Omega)$. This can be extended to a continuous
linear map from $L^\infty(\Omega;\mathds{C})$ to $\mathcal{M}^\infty(\Omega;\mathds{C})$
still respecting the equivalence classes.
We mention that the completeness of the measure space is crucial, see~\cite{Burke93}.

For a Banach space $F$, consider the canonical mapping from $L^\infty_{\sigma^\ast}(\Omega; F')$
to $\mathscr{L}(L^1(\Omega), F')$, assigning to $k$ the unique bounded linear $T_k$ operator satisfying
\begin{equation}\label{tk}
\apply[F',F]{T_kf}{v} = \int_\Omega \apply[F',F]{k(\omega)}{v} f(\omega) \dx \omega
\end{equation}
for all $v \in F$ and $f \in L^1(\Omega)$.

The following \emph{weak Dunford-Pettis theorem} asserts that this mapping
is in fact surjective for complete, strictly localizable measure spaces.\footnote{In
fact, the existence of a linear lifting can even be characterized
by the validity of the weak Dunford-Pettis theorem, cf.~\cite[\S VII.2]{Tulcea69}.}
Our version of this theorem is slightly more general than what can usually be
found in the literature. In fact, it describes precisely which kernels have to
be identified and how this quotient space has to be endowed with a Banach space
norm in order to make the identification of kernels and kernel operators
an isometric isomorphism.

\begin{thm}[weak Dunford-Pettis theorem]\label{DunfordPettis}
	Let $(\Omega, \mu)$ be a complete, strictly localizable measure space,
	and let $F$ be a Banach space.
	Then the canonical mapping $k \mapsto T_k$ is an isometric isomorphism from
	$L^\infty_{\sigma^\ast}(\Omega; F')$ to $\mathscr{L}(L^1(\Omega), F')$.
\end{thm}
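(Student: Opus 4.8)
The plan is to treat the three properties of the canonical map $k\mapsto T_k$ separately: well-definedness together with isometry, and then surjectivity, where the lifting theorem (Theorem~\ref{lifting}) does the real work. For the isometry I would fix $k\in\mathcal L_{\sigma^\ast}(\Omega;F')$ and exploit that, since $(\Omega,\mu)$ is strictly localizable, the scalar duality $(L^1(\Omega))'\cong L^\infty(\Omega)$ is available. For each fixed $v\in F$ the function $\omega\mapsto\apply[F',F]{k(\omega)}{v}$ lies in $L^\infty(\Omega)$, and this duality gives
\[
	\sup_{f\in B_{L^1}}\Bigl|\int_\Omega \apply[F',F]{k(\omega)}{v}\,f(\omega)\,\dx\mu\Bigr|
	=\esssup_{\omega\in\Omega}\bigl|\apply[F',F]{k(\omega)}{v}\bigr|.
\]
Writing $\|T_kf\|_{F'}=\sup_{v\in B_F}|\apply[F',F]{T_kf}{v}|$, using~\eqref{tk}, and interchanging the suprema over $f\in B_{L^1}$ and $v\in B_F$, I obtain $\|T_k\|=\|k\|_{\sigma^\ast}$. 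This shows in one stroke that $T_k$ is bounded, that equivalent kernels yield the same operator (so the map descends to $L^\infty_{\sigma^\ast}(\Omega;F')$), and that it is isometric, hence injective.

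The substantial point is surjectivity. Given $T\in\mathscr L(L^1(\Omega),F')$, I would first dualize pointwise in $F$: for each $v\in F$ the functional $f\mapsto\apply[F',F]{Tf}{v}$ is bounded on $L^1(\Omega)$, so the duality $(L^1)'\cong L^\infty$ produces a unique $g_v\in L^\infty(\Omega)$ with $\apply[F',F]{Tf}{v}=\int_\Omega g_v f\,\dx\mu$ and $\|g_v\|_\infty\le\|T\|\,\|v\|$. Linearity of $T$ makes $S\colon v\mapsto g_v$ a bounded linear map $F\to L^\infty(\Omega)$. The obstacle is that each $g_v$ is only an equivalence class, so there is no direct way to evaluate it at a point $\omega$ and assemble a functional $k(\omega)\in F'$; when $F$ is non-separable the exceptional null sets cannot be absorbed by a countable selection. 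This is exactly what a linear lifting $\rho\colon L^\infty(\Omega)\to\mathcal M^\infty(\Omega)$, guaranteed by Theorem~\ref{lifting}, overcomes: the composition $\rho\circ S$ attaches to every $v$ a genuine bounded measurable function $\rho(Sv)$, and I would define $k(\omega)\in F'$ by $\apply[F',F]{k(\omega)}{v}\coloneqq(\rho(Sv))(\omega)$. Linearity of $k(\omega)$ in $v$ is inherited from $\rho$ and $S$, and since $\rho$ is positive and maps $1$ to $1$ it is contractive for the supremum norm, so $|(\rho(Sv))(\omega)|\le\|Sv\|_\infty\le\|T\|\,\|v\|$ for every $\omega$; thus $\|k(\omega)\|_{F'}\le\|T\|$ uniformly.

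Finally I would verify that $k\in\mathcal L_{\sigma^\ast}(\Omega;F')$ and that it represents $T$. Measurability of $\omega\mapsto\apply[F',F]{k(\omega)}{v}=(\rho(Sv))(\omega)$ is clear because $\rho(Sv)$ is measurable, and $\sigma(F',F)$-boundedness follows from $\esssup_\omega|(\rho(Sv))(\omega)|=\|Sv\|_\infty\le\|T\|\,\|v\|$, using $\rho(Sv)=Sv$ almost everywhere. Since $\rho(Sv)$ agrees with $g_v$ off a null set, for all $f\in L^1(\Omega)$ and $v\in F$,
\[
	\apply[F',F]{T_kf}{v}=\int_\Omega(\rho(Sv))\,f\,\dx\mu=\int_\Omega g_v f\,\dx\mu=\apply[F',F]{Tf}{v},
\]
so $T_k=T$. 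The main obstacle throughout is precisely the non-separability handled in the previous step: the entire role of the lifting is to make a simultaneous, pointwise-consistent choice of representatives for the whole family $\{g_v\}_{v\in F}$, which a naive diagonal argument cannot provide.
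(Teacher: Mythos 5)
Your proof is correct, and its decisive step --- applying the linear lifting $\rho$ from Theorem~\ref{lifting} to the family of representatives $Sv$ so as to assemble a genuine functional $k(\omega)\in F'$ at every point --- is exactly the core of the paper's argument, including the pointwise bound $\|k(\omega)\|_{F'}\le\|T\|$ obtained from positivity of $\rho$ and $\rho(1)=1$. Where you differ is in the framing around that core: the paper first invokes tensor-product machinery, namely $\mathscr{L}(L^1(\Omega),F')\cong(L^1(\Omega)\tilde{\otimes}_\pi F)'\cong\mathscr{L}(F,L^\infty(\Omega))$ via Lemma~\ref{tensordual} and~\eqref{commut_assoc} (using $L^1(\Omega)'=L^\infty(\Omega)$ for localizable spaces), so that only surjectivity of the previously noted isometric embedding of $L^\infty_{\sigma^\ast}(\Omega;F')$ into $\mathscr{L}(F,L^\infty(\Omega))$ remains to be shown; you instead perform this transposition by hand, constructing $S\colon v\mapsto g_v$ directly from the scalar $L^1$--$L^\infty$ duality, and you verify the isometry $\|T_k\|=\|k\|_{\sigma^\ast}$ by an explicit interchange of suprema --- a step the paper disposes of with the remark that the composed isomorphism is ``easily checked'' to be the canonical one. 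Your route is more elementary and self-contained (no tensor products at all) and makes the isometry and the well-definedness on equivalence classes completely transparent; the paper's route buys reusability, since the same abstract isomorphism chain is the template for Theorems~\ref{repr} and~\ref{regkernel}. One point both treatments share and you handle correctly: the isometric identification $L^1(\Omega)'=L^\infty(\Omega)$ genuinely requires localizability (which strict localizability supplies), and your supremum computation uses precisely that this duality is isometric, not merely injective.
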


\begin{proof}
	Every strictly localizable space is localizable~\cite[\S 211L]{Frem2}, hence
	$L^1(\Omega)' = L^\infty(\Omega)$ under the usual identification~\cite[\S 243G]{Frem2}.
	Thus
	\[
		\mathscr{L}(L^1(\Omega),F') \cong (L^1(\Omega) \tilde{\otimes}_{\pi} F)'
			\cong \mathscr{L}(F,L^\infty(\Omega)),
	\]
	see also~\eqref{commut_assoc} and Lemma~\ref{tensordual}. Consequently, it suffices to show that
	the canonical map from $L^\infty_{\sigma^\ast}(\Omega;F')$ to $\mathscr{L}(F,L^\infty(\Omega))$
	is surjective. In fact, the resulting isometric isomorphism from $L^\infty_{\sigma^\ast}(\Omega;F')$
	to $\mathscr{L}(L^1(\Omega),F')$ is easily checked to be the one described
	in the theorem.

	Let $T \in \mathscr{L}(F,L^\infty(\Omega))$ and define
	$g_v \coloneqq \rho(Tv) \in \mathcal{M}^\infty(\Omega)$,
	where $\rho$ is a linear lifting from $L^\infty(\Omega)$ to $\mathcal{M}^\infty(\Omega)$,
	which exists by Theorem~\ref{lifting}.
	Then $g_v(\omega)$ is linear in $v$ and
	and $|g_v(\omega)| \le \|T\| \left\|v\right\|$
	because $\rho(f)(\omega) \le \esssup_{\omega \in \Omega} |f(\omega)|$
	for every $f \in L^\infty(\Omega)$.

	Thus $\apply{g(\omega)}{v} \coloneqq g_v(\omega)$ defines
	an element $g(\omega) \in F'$ for every $\omega \in \Omega$,
	and $\|g(\omega)\|_{F'} \le \|T\|$. By definition,
	$g$ is $\sigma(F',F)$-measurable and $\|g\|_{\sigma^\ast} \le \|T\| < \infty$,
	hence $g \in \mathcal{L}_{\sigma^\ast}(\Omega;F')$.
	By construction, $T_{\tilde{g}} = T$, where
	$\tilde{g} \coloneqq [g]_\sim \in L^\infty_{\sigma^\ast}(\Omega;F')$
	and $T_{\tilde{g}}$ is defined as in~\eqref{tk}.
	This finishes the proof.
\end{proof}

\begin{rem}
	The theorem states in particular that every bounded linear operator $T$ from $L^1(\Omega)$ into
	a dual space has a $\sigma(F',F)$-density $k \in L^\infty_{\sigma^\ast}(\Omega;F')$, i.e., 
	\[
		Tf = \int_\Omega f k \, \dx\mu\qquad\hbox{ for all }f\in L^1(\Omega)
	\]
	as a $\sigma(F',F)$-Pettis integral. This result can also be found
	in~\cite[Theorem~2.1.6]{DunPet40} and~\cite[3.3]{defflo93}.
\end{rem}

If $F$ is separable, then $\|f(\cdot)\|_{F'}$ is measurable and 
\begin{equation}\label{badnorm}
	\|f\|_\infty = \esssup_{\omega \in \Omega} \|f(\omega)\|_{F'}.
\end{equation}
In fact, $\|f(\omega)\|_{F'} = \sup_n |\apply{f(\omega)}{v_n}|$
for every $\omega \in \Omega$, where $(v_n)_{n\in\mathds{N}}$ is dense in $F$.
This is the norm considered in~\cite[Definition~2.1]{AreTho05}.
But in the general (non-separable) case this norm is not even
well-defined on $\mathcal{L}_{\sigma^\ast}(\Omega; F')$ as the following
example shows.

\begin{exa}\label{nonorm}
	Let $I$ be a set of the same cardinality as the continuum and consider the non-separable
	Hilbert space $F \coloneqq \ell^2(I)$. Then there exists a bounded, $\sigma(F',F)$-measurable
	function $f$ on $\Omega \coloneqq (0,1)$ taking values in $F'$ such that the equivalence
	class of $f$ in $L^\infty_{\sigma^\ast}(\Omega;F')$
	is the zero function, but $\|f(t)\|_{F'} = 1$ for every $t \in \Omega$.
	So if $\|f\|_\infty$ was defined as in~\eqref{badnorm}, its value would depend on
	the choice of the representative.

	In fact, let $(e_t)_{t \in \Omega}$ be an orthonormal basis of $F$.
	Define $\apply{f(t)}{v} \coloneqq \scalar{v}{e_t}$ for $t\in I$, $v\in F$.
	Then $f$ has the properties described above, since
	$\|f(t)\|_{F'} = 1$ for every $t \in \Omega$,
	and $f$ is $\sigma(F',F)$-measurable because for every $v \in F$
	the function $\apply{f(\cdot)}{v}$ vanishes outside a countable
	subset of $\Omega$. More precisely this shows that $f$ is
	equivalent to the zero function.
\end{exa}

\section{Representability of Operators on Spaces of Vector-Valued Functions}\label{mainsec}

We start by giving a definition of the projective tensor product in terms of
a universal property. This approach is not the usual one.
We introduce the tensor product in this way, though, because we will
encounter a similar object in Section~\ref{possec} and want out the similarity
between the definitions.
We refer to~\cite{defflo93} for a systematic introduction to tensor products.
There one also can find proofs of all the mentioned results about the
projective tensor product.

Let $E$ and $F$ be Banach spaces. A pair $(H, \chi)$, where $H$ is a Banach space
and $\phi$ is a bilinear map from $E \times F$ to $H$ such that $\|\chi(u,v)\| = \|u\|\,\|v\|$,
is called a \emph{projective tensor product of $E$ and $F$} if for every vector space
$G$ and every bounded bilinear map $\phi$ from $E \times F$
to $G$ there exists a unique bounded linear operator $T$ from $H$ to $G$ such that
the diagram
\begin{center}
\begin{tikzpicture}
	\centering
	\matrix (m) [matrix of math nodes, row sep=2.5em, column sep=4em, text height=1.5ex, text depth=0.25ex]
	{ E \times F & G \\ H & \\ };
	\path[->]
	(m-1-1) edge node[auto] {$\phi$} (m-1-2)
			edge node[auto,swap] {$\chi$} (m-2-1);
	\path[->,dashed]
	(m-2-1) edge node[auto,swap] {$T$} (m-1-2);
\end{tikzpicture}
\end{center}
commutes, and, moreover, this unique map $T$ satisfies
$\|T\| = \|\phi\|$. Here, as usual, $\|\phi\|$ denotes the least
constant $c$ such that $|\phi(u,v)| \le c \|u\|\,\|v\|$ holds
for all $u \in E$ and $v \in F$.

For any pair of Banach spaces there exists a projective tensor product.
It is standard to check that the projective tensor product is
essentially unique, i.e., for any two projective tensor products
$(H_1,\chi_1)$ and $(H_2,\chi_2)$ there is an isometric isomorphism $J$
from $H_1$ to $H_2$ such that $J \circ \chi_1 = \chi_2$.
We denote this space by $E \tilde{\otimes}_\pi F$ and write
$u \otimes v$ for $\chi(u,v)$. Moreover, $E \otimes F$ denotes
the span of the image of $\chi$, which is dense in $E \tilde{\otimes}_\pi F$.

It follows that for Banach spaces $E$, $F$ and $G$,
\begin{equation}\label{commut_assoc}
	E \tilde{\otimes}_\pi F \cong F \tilde{\otimes}_\pi E 
\quad \text{and} \quad
	(E \tilde{\otimes}_\pi F) \tilde{\otimes}_\pi G
		\cong E \tilde{\otimes}_\pi (F \tilde{\otimes}_\pi G)
\end{equation}
via the natural mappings.

If $(\Omega,\mu)$ is a measure space, we write $L^1(\Omega;E)$ for the Bochner space of measurable,
integrable functions on $\Omega$ taking values in $E$. Then
\begin{equation}\label{L1t}
	L^1(\Omega_1) \tilde{\otimes}_\pi L^1(\Omega_2) \cong L^1(\Omega_1 \times \Omega_2),
\end{equation}
for arbitrary measure spaces $\Omega_1$ and $\Omega_2$, where $\Omega_1 \times \Omega_2$
is equipped with the complete locally determined product measure~\cite[\S 251F]{Frem2},
see~\cite[Exercise~3.27]{defflo93} or~\cite[Notes to \S 253]{Frem2}.
For $\sigma$-finite measure spaces the complete locally determined  product measure
is the completion of the usual (unique) product measure of $\sigma$-finite
measure spaces, see~\cite[\S 251K]{Frem2}.

We recall two further results about tensor products, which can be found for example
in~\cite[III.6.5, III.6.2]{Schaefer99}.

\begin{lemma}\label{L1E}
	Let $(\Omega,\mu)$ be a measure space. Then
	$T\colon L^1(\Omega) \otimes E \to L^1(\Omega;E)$
	defined via 
	\[
		T(f \otimes \eta) \coloneqq f(\cdot)\eta
	\]
	extends to an isometric isomorphism from
	$L^1(\Omega)\tilde{\otimes}_\pi E$ onto $L^1(\Omega;E)$.
\end{lemma}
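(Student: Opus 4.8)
The plan is to obtain $T$ from the universal property of the projective tensor product and then to verify that it is an isometric isomorphism by combining contractivity, a density argument, and a reduction to finite dimensions. First I would consider the bilinear map $\phi\colon L^1(\Omega)\times E\to L^1(\Omega;E)$ given by $\phi(f,\eta)\coloneqq f(\cdot)\eta$. Since $\|\phi(f,\eta)\|_{L^1(\Omega;E)}=\int_\Omega|f(\omega)|\,\|\eta\|_E\,\dx\mu=\|f\|_{L^1(\Omega)}\,\|\eta\|_E$, this map is bounded bilinear with $\|\phi\|=1$. The universal property then supplies a unique bounded linear operator $T\colon L^1(\Omega)\tilde\otimes_\pi E\to L^1(\Omega;E)$ satisfying $T(f\otimes\eta)=f(\cdot)\eta$ and $\|T\|=\|\phi\|=1$. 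In particular $T$ is automatically well defined, so that no separate check that the formula respects the tensor relations is needed.

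I would then observe that $T$ has dense range: any integrable simple function $\sum_{k=1}^m\mathds{1}_{A_k}\eta_k$ with $\mu(A_k)<\infty$ equals $T\bigl(\sum_k\mathds{1}_{A_k}\otimes\eta_k\bigr)$, and such functions are dense in $L^1(\Omega;E)$ by the construction of the Bochner space. Hence it suffices to show that $T$ is isometric on the dense subspace $L^1(\Omega)\otimes E$; by continuity it is then isometric on all of $L^1(\Omega)\tilde\otimes_\pi E$, its range is closed, and a closed dense subspace is the whole space, so $T$ is an isometric isomorphism onto $L^1(\Omega;E)$.

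For the isometry, recall the projective norm $\|z\|_\pi=\inf\{\sum_i\|f_i\|_{L^1(\Omega)}\|\eta_i\|_E : z=\sum_i f_i\otimes\eta_i\}$. Applying $T$ to any representation gives $\|Tz\|_{L^1(\Omega;E)}=\int_\Omega\|\sum_i f_i(\omega)\eta_i\|_E\,\dx\mu\le\sum_i\|f_i\|_{L^1(\Omega)}\|\eta_i\|_E$, and the infimum yields $\|Tz\|\le\|z\|_\pi$. For the reverse inequality I would fix $z=\sum_{i=1}^n f_i\otimes\eta_i$, set $g\coloneqq Tz$, and use that $g$ is a.e.\ valued in the finite-dimensional space $E_0\coloneqq\hull\{\eta_1,\dots,\eta_n\}$. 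Writing elements in a fixed basis $b_1,\dots,b_d$ of $E_0$ and using that all norms on $E_0$ are equivalent, one sees that both $\|\cdot\|_\pi$ on $L^1(\Omega)\otimes E_0$ and the Bochner norm on $L^1(\Omega;E_0)$ are equivalent to $\sum_l\|u_l\|_{L^1(\Omega)}$, where the $u_l$ are the coordinate functions. In particular $L^1(\Omega)\otimes E_0$ is already complete, hence equals $L^1(\Omega)\tilde\otimes_\pi E_0$, and $T$ restricts to a topological (not yet isometric) isomorphism of $L^1(\Omega)\tilde\otimes_\pi E_0$ onto $L^1(\Omega;E_0)$.

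Finally I would approximate $g$ in $L^1(\Omega;E_0)$ by $E_0$-valued integrable simple functions $g_m=\sum_k\mathds{1}_{A_k^m}\xi_k^m$ with disjoint $A_k^m$, so that $\int_\Omega\|g_m\|_E\,\dx\mu\to\int_\Omega\|g\|_E\,\dx\mu=\|Tz\|$. Their natural preimages $z_m\coloneqq\sum_k\mathds{1}_{A_k^m}\otimes\xi_k^m$ satisfy $\|z_m\|_\pi\le\sum_k\mu(A_k^m)\|\xi_k^m\|_E=\int_\Omega\|g_m\|_E\,\dx\mu$. Since the restricted $T$ is a topological isomorphism, $g_m\to g$ forces $z_m\to z$ in the projective norm, so $\|z\|_\pi=\lim_m\|z_m\|_\pi\le\lim_m\int_\Omega\|g_m\|_E\,\dx\mu=\|Tz\|$, which together with the contractivity gives $\|z\|_\pi=\|Tz\|$. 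The main obstacle is exactly this lower bound: one cannot approximate $g$ by simple functions and pass to preimages without already controlling $T$ on the relevant subspace, and it is the finite-dimensionality of $E_0$ — where the projective and Bochner norms are forced to be equivalent — that removes this apparent circularity. Notably this argument imposes no condition on $(\Omega,\mu)$, matching the generality of the statement.
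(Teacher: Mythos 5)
Your proof is correct, and it takes a genuinely different route from the paper, for the simple reason that the paper does not prove Lemma~\ref{L1E} at all: it cites it from~\cite[III.6.5]{Schaefer99}. Your argument is self-contained and checks out in every step. The existence, well-definedness and contractivity of $T$ via the universal property are fine, dense range via integrable simple functions is fine, and the only delicate point --- the lower bound $\|z\|_\pi \le \|Tz\|$ --- is handled correctly: since $z = \sum_{i=1}^n f_i \otimes \eta_i$ lives in $L^1(\Omega) \otimes E_0$ with $E_0 \coloneqq \hull\{\eta_1,\dots,\eta_n\}$ finite-dimensional, both the projective norm and the Bochner norm are equivalent to the coordinate norm $\sum_l \|u_l\|_{L^1}$ (for the projective lower bound one applies the coordinate functionals $b_l^\ast \in E_0'$ via functoriality of $\tilde\otimes_\pi$, a step you leave implicit but which is routine), so $L^1(\Omega)\otimes E_0$ is already complete, the restricted $T$ is a topological isomorphism onto $L^1(\Omega;E_0)$, and hence $g_m \to g$ indeed forces $z_m \to z$, also in the projective norm of $L^1(\Omega)\tilde\otimes_\pi E$ because every representation over $E_0$ is a representation over $E$. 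You correctly identify that the finite-dimensionality is what breaks the circularity. For comparison, the standard proofs in the literature (Schaefer; see also Defant--Floret or Ryan) obtain the reverse inequality by duality instead: one shows $\|d\|_\pi \ge \sum_k \mu(A_k)\|\xi_k\|$ for disjointly supported simple tensors $d = \sum_k \setone_{A_k} \otimes \xi_k$ by pairing with the functional induced by the bounded field $\omega \mapsto \xi_k^\ast$ of Hahn--Banach norming functionals on the $A_k$, which gives the isometry directly on a dense subspace with no finite-dimensional detour. Your version trades that duality argument for the norm-equivalence reduction; it is slightly longer but avoids norming functionals, and, as you note, it imposes no hypothesis on $(\Omega,\mu)$ --- which matters for the paper, since Theorem~\ref{repr} applies the lemma to complete locally determined product measure spaces.
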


\begin{lemma}\label{tensordual}
	The dual space of $E \tilde{\otimes}_\pi F$ is $\mathscr{L}(E,F')$,
	where the duality acts as
	\[\apply[\mathscr{L}(E,F'), E \tilde{\otimes}_\pi F]{T}{x \otimes y} \coloneqq \apply[F',F]{Tx}{y}\]
	for $T \in \mathscr{L}(E, F')$ and $x \otimes y \in E \otimes F$.
\end{lemma}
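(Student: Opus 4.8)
The plan is to read off the dual directly from the universal property that defines $E \tilde{\otimes}_\pi F$. Taking the target space $G$ to be the scalar field $\mathds{K}$ in that universal property, every bounded bilinear form $\phi\colon E \times F \to \mathds{K}$ factors uniquely as $\phi = T \circ \chi$ for a bounded linear functional $T$ on $E \tilde{\otimes}_\pi F$ with $\|T\| = \|\phi\|$; conversely, precomposing any $T \in (E \tilde{\otimes}_\pi F)'$ with the canonical bilinear map $\chi$ produces a bounded bilinear form. Since $E \otimes F$ is dense in $E \tilde{\otimes}_\pi F$, a functional is determined by its values on elementary tensors, so these two assignments are mutually inverse. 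This yields an isometric isomorphism between $(E \tilde{\otimes}_\pi F)'$ and the space $\mathcal{B}(E,F)$ of bounded bilinear forms on $E \times F$, the norm on the latter being $\|\phi\|$ as defined just above.

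It then remains to identify $\mathcal{B}(E,F)$ isometrically with $\mathscr{L}(E,F')$. Given a bounded bilinear form $\phi$, I would set $(Sx)(y) \coloneqq \phi(x,y)$; for fixed $x$ this is a bounded linear functional on $F$, hence an element $Sx \in F'$, and $x \mapsto Sx$ is linear. The identity $\|Sx\|_{F'} = \sup_{y \in B_F}|\phi(x,y)|$ combined with the definition of $\|\phi\|$ gives $\|S\| = \|\phi\|$, so $S \in \mathscr{L}(E,F')$ with the same norm. Conversely every $S \in \mathscr{L}(E,F')$ arises this way from the bilinear form $(x,y) \mapsto \apply[F',F]{Sx}{y}$, and the two constructions invert one another; this is the standard isometric identification $\mathcal{B}(E,F) \cong \mathscr{L}(E,F')$.

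Composing the two isometric isomorphisms assigns to $T \in \mathscr{L}(E,F')$ the functional on $E \tilde{\otimes}_\pi F$ whose value on an elementary tensor is $\apply[F',F]{Tx}{y}$, which is exactly the pairing asserted in the statement; continuity together with the density of $E \otimes F$ shows that this pairing extends uniquely to all of $E \tilde{\otimes}_\pi F$ and pins down the functional. The argument is essentially bookkeeping, so I do not expect a genuine obstacle; the one point requiring care is to verify that the scalar case $G = \mathds{K}$ of the universal property delivers not merely a bijection but an isometry, i.e.\ that the norm equality $\|T\| = \|\phi\|$ furnished by the universal property matches, after the second identification, the operator norm on $\mathscr{L}(E,F')$. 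This is precisely where the normalization $\|\chi(u,v)\| = \|u\|\,\|v\|$ in the definition of the projective tensor product enters.
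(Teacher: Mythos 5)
Your proof is correct, but it is worth noting that the paper itself offers no proof of this lemma at all: it is quoted from \cite[III.6.5, III.6.2]{Schaefer99}, where the dual of the projective tensor product is identified with the space of continuous bilinear forms in the general locally convex setting. Your argument is therefore a genuine, self-contained alternative adapted to the paper's own framework: since the paper \emph{defines} $E \tilde{\otimes}_\pi F$ by a universal property that already contains the norm equality $\|T\| = \|\phi\|$, specializing the target to the scalar field yields the isometric identification of $(E \tilde{\otimes}_\pi F)'$ with the bounded bilinear forms $\mathcal{B}(E,F)$ almost for free, and the classical isometry $\mathcal{B}(E,F) \cong \mathscr{L}(E,F')$ via $(Sx)(y) \coloneqq \phi(x,y)$ finishes the job. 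Both directions of your first identification are sound: from $\phi$ to the functional $T$ the universal property hands you $\|T\| = \|\phi\|$ directly, and in the reverse direction, for $T \in (E \tilde{\otimes}_\pi F)'$ and $\phi \coloneqq T \circ \chi$, the uniqueness clause together with density of $E \otimes F$ forces the functional associated to $\phi$ to be $T$ itself, so the isometry holds there too; your closing remark correctly locates the only place the normalization $\|\chi(u,v)\| = \|u\|\,\|v\|$ is needed, namely the estimate $|\phi(u,v)| \le \|T\|\,\|u\|\,\|v\|$. What each route buys: yours gives a short, purely Banach-space proof in which the isometry is visible at every step, whereas the cited textbook treatment (projective topology as the finest locally convex topology making $\chi$ continuous) establishes the duality for general locally convex spaces --- which is exactly the generality the paper later invokes in its remark extending Theorem~\ref{repr} to Fr\'echet spaces, where your isometric universal property is no longer the natural formulation.
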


Now we are prepared to prove the vector-valued version of Theorem~\ref{Repr}.
For $E = F = \mathds{R}$, it essentially reduces to Theorem~\ref{Repr}.
But for Banach spaces, in particular non-separable ones, it seems to be new.
Moreover, the measure theoretical assumption, namely the existence of a lifting,
seems to be optimal.

Let $(\Omega_1,\mu_1)$ and $(\Omega_2,\mu_2)$ be measure spaces.
Given $k\in L^\infty_{\sigma^\ast}(\Omega_1 \times \Omega_2; \mathscr{L}(E,F'))$,
we denote by $T_k$ the unique bounded linear operator from $L^1(\Omega;E)$ to
$L^\infty_{\sigma^\ast}(\Omega_2;F')$ that satisfies
\begin{equation}\label{repr:formula}
	\apply[F',F]{(T_kf)(\omega_2)}{v} = \int_{\Omega_1} \apply[F',F]{k(\omega_1,\omega_2)f(\omega_1)}{v} \dx\mu_1(\omega_1)
\end{equation}
for every $v \in F$, $f \in L^1(\Omega_1)$, and $\mu_2$-almost every $\omega_2 \in \Omega_2$.
As before, we equip $\Omega_1 \times \Omega_2$ with the complete locally determined  product measure.

\begin{thm}\label{repr}
	Let $(\Omega_1,\mu_1)$ and $(\Omega_2,\mu_2)$ be complete, strictly localizable
	measure spaces.
	The mapping $k \mapsto T_k$ from $L^\infty_{\sigma^\ast}(\Omega_1 \times \Omega_2; \mathscr{L}(E,F'))$
	to $\mathscr{L}(L^1(\Omega_1; E), L^\infty_{\sigma^\ast}(\Omega_2; F'))$
	is an isometric isomorphism.
\end{thm}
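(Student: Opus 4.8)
The plan is to realize the stated map as a composition of the canonical identifications recalled before the statement, in direct analogy with the chain of isomorphisms used after Theorem~\ref{Repr} in the scalar case, and then to check that this abstract isomorphism is the concrete assignment $k \mapsto T_k$. Writing $A \coloneqq L^1(\Omega_1)$ and $C \coloneqq L^1(\Omega_2)$, the target space unwinds as
\begin{align*}
\mathscr{L}(L^1(\Omega_1; E), L^\infty_{\sigma^\ast}(\Omega_2; F'))
  &\cong \mathscr{L}(A \tilde{\otimes}_\pi E,\ \mathscr{L}(C, F')) \\
  &\cong \mathscr{L}(A \tilde{\otimes}_\pi E,\ (C \tilde{\otimes}_\pi F)') \\
  &\cong ((A \tilde{\otimes}_\pi E) \tilde{\otimes}_\pi (C \tilde{\otimes}_\pi F))' \\
  &\cong ((A \tilde{\otimes}_\pi C) \tilde{\otimes}_\pi (E \tilde{\otimes}_\pi F))' \\
  &\cong (L^1(\Omega_1 \times \Omega_2) \tilde{\otimes}_\pi (E \tilde{\otimes}_\pi F))' \\
  &\cong \mathscr{L}(L^1(\Omega_1 \times \Omega_2),\ (E \tilde{\otimes}_\pi F)') \\
  &\cong \mathscr{L}(L^1(\Omega_1 \times \Omega_2),\ \mathscr{L}(E, F')) \\
  &\cong L^\infty_{\sigma^\ast}(\Omega_1 \times \Omega_2;\ \mathscr{L}(E, F')),
\end{align*}
which is exactly the source space. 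Since each arrow is an isometric isomorphism, so is the composite.

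Each step is one of the results collected above. The first isomorphism combines Lemma~\ref{L1E} (identifying $L^1(\Omega_1;E)$ with $A\tilde{\otimes}_\pi E$) with Theorem~\ref{DunfordPettis} (identifying $L^\infty_{\sigma^\ast}(\Omega_2;F')$ with $\mathscr{L}(C,F')$), using that $\mathscr{L}(\cdot,\cdot)$ is functorial under isometric isomorphisms and that $\Omega_2$ is complete and strictly localizable. The second, sixth, and seventh isomorphisms are Lemma~\ref{tensordual}; in particular it turns $\mathscr{L}(E,F')$ into the dual $(E\tilde{\otimes}_\pi F)'$, so $\mathscr{L}(E,F')$ is a dual Banach space with canonical predual $E\tilde{\otimes}_\pi F$. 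The third isomorphism is again Lemma~\ref{tensordual} in the form $\mathscr{L}(X,Y')\cong (X\tilde{\otimes}_\pi Y)'$ with $X=A\tilde{\otimes}_\pi E$, $Y=C\tilde{\otimes}_\pi F$; the fourth merely reorders the four tensor factors via~\eqref{commut_assoc}; and the fifth is~\eqref{L1t}. The final isomorphism is Theorem~\ref{DunfordPettis} applied on $\Omega_1\times\Omega_2$ to the dual space $\mathscr{L}(E,F')=(E\tilde{\otimes}_\pi F)'$; here one must also confirm that $\Omega_1\times\Omega_2$, with the complete locally determined product measure, is itself complete and strictly localizable so that the theorem is applicable, completeness being automatic.

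The existence of \emph{an} isometric isomorphism is thus formal; the actual content is to verify that the composite map coincides with the explicit assignment $k\mapsto T_k$ given by~\eqref{repr:formula}. Because every identification above is the canonical one and $L^1(\Omega_1)\otimes E$ is dense in $L^1(\Omega_1;E)$, it suffices to check agreement on decomposable data: evaluate $T_k$ at $f=h\,\eta$ with $h\in L^1(\Omega_1)$, $\eta\in E$, pair $(T_kf)(\cdot)$ against $v\in F$ as in~\eqref{tk}, and integrate against $g\in L^1(\Omega_2)$ to reach the scalar duality between $L^1(\Omega_1\times\Omega_2)\tilde{\otimes}_\pi(E\tilde{\otimes}_\pi F)$ and its dual. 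Tracking the elementary tensor $h\otimes\eta\otimes g\otimes v$ through each arrow and comparing with~\eqref{repr:formula}, which on such data reduces to $\int_{\Omega_2}\int_{\Omega_1}\apply{k(\omega_1,\omega_2)\eta}{v}\,h(\omega_1)g(\omega_2)\,\dx\mu_1(\omega_1)\,\dx\mu_2(\omega_2)$, shows the two maps agree on a total set, hence everywhere by continuity. I expect the main obstacle to be exactly this bookkeeping: keeping straight the several roles of Lemma~\ref{tensordual} and, above all, verifying that the $\sigma^\ast$-measurability and $\sigma^\ast$-norm attached to $\mathscr{L}(E,F')$ relative to its predual $E\tilde{\otimes}_\pi F$ are precisely the ones entering the definition of $T_k$; the strict localizability of the product space is a secondary point to be settled so that Theorem~\ref{DunfordPettis} may be invoked on $\Omega_1\times\Omega_2$.
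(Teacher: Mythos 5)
Your proposal is correct and takes essentially the same route as the paper: the same chain of isometric identifications built from Theorem~\ref{DunfordPettis}, Lemma~\ref{tensordual}, Lemma~\ref{L1E}, \eqref{commut_assoc} and \eqref{L1t} (merely factored into finer steps), followed by the same verification on elementary tensors, which the paper itself dismisses as an easy but tiresome exercise. The one point you flag but defer---that the complete locally determined product measure on $\Omega_1 \times \Omega_2$ is complete and strictly localizable---is handled in the paper only by citation (\S 251I and \S 251N of Fremlin), so nothing essential is missing.
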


Examples of strictly localizable measure spaces include $\sigma$-finite measure spaces~\cite[\S 211L]{Frem2}
and locally compact spaces equipped with a Radon measure~\cite[\S 416B]{Frem41}.

\begin{proof}
	It is known that the complete locally determined product measure space $\Omega_1 \times \Omega_2$
	is again complete~\cite[\S 251I]{Frem2} and strictly localizable~\cite[\S 251N]{Frem2}.
	Using (in this order)
	\begin{itemize}
		\item Theorem~\ref{DunfordPettis}, 
		\item Lemma~\ref{tensordual} (applied twice),
		\item Lemma~\ref{L1E}, \eqref{commut_assoc},
		\item Lemma~\ref{tensordual},
		\item Lemma~\ref{tensordual} together with Theorem~\ref{DunfordPettis},
	\end{itemize}
	we see that the chain
	\begin{align*}
		\mathscr{L}(L^1(\Omega_1; E), L^\infty_{\sigma^\ast}(\Omega_2; F'))
			& \cong \mathscr{L}(L^1(\Omega_1; E), \mathscr{L}(L^1(\Omega_2), F')) \\
			& \cong (L^1(\Omega_1; E) \tilde{\otimes}_\pi (L^1(\Omega_2) \tilde{\otimes}_\pi F))' \\
			& \cong (L^1(\Omega_1) \tilde{\otimes}_\pi L^1(\Omega_2) \tilde{\otimes}_\pi E \tilde{\otimes}_\pi F)' \\
			& \cong \mathscr{L}(L^1(\Omega_1 \times \Omega_2), (E \tilde{\otimes}_\pi F)') \\
			& \cong L^\infty_{\sigma^\ast}(\Omega_1 \times \Omega_2; \mathscr{L}(E, F'))
	\end{align*}
	consists of isometric isomorphies.
	It is an easy, yet tiresome, exercise to check that the composition of the
	above isometries is in fact the operator given by~\eqref{repr:formula}.
\end{proof}

\begin{rem}
	The mapping described in Theorem~\ref{repr} is an isomorphism even if $E$
	and $F$ are merely Fr\'echet spaces. The proof of this fact is essentially
	the same as the above. One only has to note that all results we had cited in
	the proof remain valid in this case, see~\cite{Flo74} for Theorem~\ref{DunfordPettis},
	and the aforementioned sections of~\cite{Schaefer99} for Lemmata~\ref{L1E} and~\ref{tensordual}.
\end{rem}

In some situations Theorem~\ref{repr} remains true if we replace $L^\infty_{\sigma^\ast}$
by $L^\infty$, i.e., if we consider (strongly) measurable functions only, as the
following corollary shows.

\begin{cor}\label{repr:cor}
	If $(\Omega_1,\mu_1)$ and $(\Omega_2,\mu_2)$ are complete,
	finite measure spaces and $\mathscr{L}(E, F')$ has the Radon-Nikodym property,
	then $L^\infty(\Omega_1 \times \Omega_2; \mathscr{L}(E,F'))$
	is isometrically isomorphic to
	$\mathscr{L}(L^1(\Omega_1, E), L^\infty(\Omega_2, F'))$
	by the same identification as in Theorem~\ref{repr}.
	Moreover,
	\[
		(T_kf)(\omega_2) = \int_{\Omega_1} k(\omega_1,\omega_2)f(\omega_1) \dx\mu_1(\omega_1)
	\]
	as a Bochner integral in $F'$ for $\mu_2$-almost every $\omega_2 \in \Omega_2$.
\end{cor}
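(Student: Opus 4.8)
The plan is to deduce the corollary directly from Theorem~\ref{repr} by showing that, under the present hypotheses, the two occurrences of $L^\infty_{\sigma^\ast}$ there may be replaced by the ordinary Bochner space $L^\infty$. The whole argument rests on one observation, which I would isolate first: for a dual Banach space $G = H'$ and a finite, complete measure space $(\Omega,\mu)$ (such a space is strictly localizable, so Theorem~\ref{DunfordPettis} applies), the canonical embedding $L^\infty(\Omega;G) \hookrightarrow L^\infty_{\sigma^\ast}(\Omega;G)$ --- a strongly measurable, essentially norm-bounded function being in particular $\sigma(G,H)$-measurable and $\sigma(G,H)$-bounded --- is always isometric, and it is surjective precisely when $G$ has the Radon--Nikodym property.

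For the isometry, the inequality $\|g\|_{\sigma^\ast}\le\|g\|_\infty$ is immediate from $|\apply[G,H]{g(\omega)}{v}| \le \|g(\omega)\|_G\|v\|_H$. For the reverse I would use that a strongly measurable $g$ is essentially separably valued: its essential range lies in a separable subspace $G_0 \subseteq G$, and for such a subspace one finds a countable set $(v_n) \subseteq B_H$ with $\|\xi\|_G = \sup_n |\apply[G,H]{\xi}{v_n}|$ for all $\xi \in G_0$. Then $\|g(\omega)\|_G = \sup_n|\apply[G,H]{g(\omega)}{v_n}|$ for a.e.\ $\omega$, and since the supremum is countable it commutes with the essential supremum, giving $\esssup_\omega \|g(\omega)\|_G = \sup_n \esssup_\omega |\apply[G,H]{g(\omega)}{v_n}| \le \|g\|_{\sigma^\ast}$. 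For surjectivity, observe that the composite of the embedding with the isometric isomorphism $L^\infty_{\sigma^\ast}(\Omega;G) \cong \mathscr{L}(L^1(\Omega),G)$ of Theorem~\ref{DunfordPettis} is exactly the representation map $g \mapsto (f \mapsto \int_\Omega fg\,\dx\mu)$. The Radon--Nikodym property says this composite is onto, and since the second factor is bijective, the embedding itself is onto; the converse is clear.

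With this in hand the corollary assembles quickly. The product $\Omega_1\times\Omega_2$ is finite and complete, hence strictly localizable. Applying the observation with $\Omega = \Omega_1 \times \Omega_2$ and $G = \mathscr{L}(E,F') = (E \tilde{\otimes}_\pi F)'$ (a dual space by Lemma~\ref{tensordual}), which has the Radon--Nikodym property by assumption, identifies $L^\infty(\Omega_1\times\Omega_2;\mathscr{L}(E,F'))$ isometrically with $L^\infty_{\sigma^\ast}(\Omega_1\times\Omega_2;\mathscr{L}(E,F'))$. For the codomain I first note that $F'$ itself has the Radon--Nikodym property (the case $E=\{0\}$ being trivial): fixing $\phi \in E'$ with $\|\phi\|=1$, the assignment $y' \mapsto \phi \otimes y'$, that is, $x \mapsto \apply[E',E]{\phi}{x}\,y'$, embeds $F'$ isometrically onto a closed subspace of $\mathscr{L}(E,F')$, and the Radon--Nikodym property passes to closed subspaces. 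Applying the observation with $\Omega = \Omega_2$ and $G = F'$ then gives $L^\infty(\Omega_2;F') = L^\infty_{\sigma^\ast}(\Omega_2;F')$ isometrically, whence $\mathscr{L}(L^1(\Omega_1;E),L^\infty_{\sigma^\ast}(\Omega_2;F')) = \mathscr{L}(L^1(\Omega_1;E),L^\infty(\Omega_2;F'))$. Chaining both equalities of normed spaces with the isometric isomorphism of Theorem~\ref{repr} yields the claim, and since the two replacements are literal identities the identifying map is precisely the one of Theorem~\ref{repr}.

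It remains to upgrade the weak formula~\eqref{repr:formula} to a genuine Bochner integral. For strongly measurable, essentially bounded $k$ and $f \in L^1(\Omega_1;E)$ I would set $h(\omega_1,\omega_2) \coloneqq k(\omega_1,\omega_2)f(\omega_1)$. Composing the strongly measurable map $(\omega_1,\omega_2)\mapsto (k(\omega_1,\omega_2),f(\omega_1))$ with the continuous bilinear application map $\mathscr{L}(E,F')\times E \to F'$ shows $h$ is strongly measurable, and $\|h(\omega_1,\omega_2)\|_{F'} \le \|k\|_\infty\|f(\omega_1)\|_E$ together with $\mu_2(\Omega_2)<\infty$ gives $h \in L^1(\Omega_1\times\Omega_2;F')$. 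The vector-valued Fubini theorem then provides, for a.e.\ $\omega_2$, a Bochner integral $H(\omega_2) \coloneqq \int_{\Omega_1} h(\omega_1,\omega_2)\,\dx\mu_1(\omega_1)$ defining a function $H \in L^\infty(\Omega_2;F')$. Testing against $v \in F$ and using that the Bochner integral commutes with $\apply[F',F]{\cdot}{v}$, formula~\eqref{repr:formula} yields $\apply[F',F]{H(\omega_2)}{v} = \apply[F',F]{(T_kf)(\omega_2)}{v}$ for a.e.\ $\omega_2$. The main subtlety is that this null set depends on $v$; I would resolve it by passing to the $\sigma^\ast$-norm, in which the essential supremum sits inside the supremum over $v$, so the pointwise-in-$v$ identities give $\|H - T_kf\|_{\sigma^\ast}=0$, and the isometry $L^\infty(\Omega_2;F') = L^\infty_{\sigma^\ast}(\Omega_2;F')$ then forces $H = T_kf$ as a.e.-defined strongly measurable functions. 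I expect this interchange of quantifiers, together with the isometry in the observation, to be the only genuinely delicate points; the rest is bookkeeping on top of Theorem~\ref{repr} and the Radon--Nikodym property.
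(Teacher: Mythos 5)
Your proposal is correct and takes essentially the same route as the paper's proof: the central observation that the natural embedding of $L^\infty(\Omega;G')$ into $L^\infty_{\sigma^\ast}(\Omega;G')$ is an isometric isomorphism whenever $G'$ has the Radon--Nikodym property (via Theorem~\ref{DunfordPettis} and the definition of representability), applied once with $G' = \mathscr{L}(E,F') = (E \tilde{\otimes}_\pi F)'$ on $\Omega_1 \times \Omega_2$ and once with $G' = F'$ on $\Omega_2$, where $F'$ inherits the property as a closed subspace of $\mathscr{L}(E,F')$ after the harmless reduction to $E \neq \{0\}$. You simply supply details the paper leaves implicit --- the isometry via essentially separably valued functions and countable norming sets, and the Fubini argument upgrading~\eqref{repr:formula} to a Bochner integral --- all of which are sound.
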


\begin{proof}
	We can assume $E \neq \{0\}$ without loss of generality.
	Then $F'$ has the Radon-Nikodym property, too, since it can
	be identified with a closed subspace of $\mathscr{L}(E, F')$.
	Now the claim follows from the following observation.
	If $G$ is a Banach space, $G'$ has the Radon-Nikodym property,
	and $(\Omega,\mu)$ is a finite measure space,
	then the natural embedding of $L^\infty(\Omega;G')$
	into $L^\infty_{\sigma^\ast}(\Omega;G')$ is in fact an
	isometric isomorphism. This follows from Theorem~\ref{DunfordPettis}
	and the definition of the Radon-Nikodym property.
\end{proof}

\begin{rem}
	There is a class of spaces for which the assumptions of the preceding corollary are fulfilled.
	In fact, by a result due to Andrews~\cite{and83},
	$\mathscr{L}(E,F')$ has the Radon-Nikodym property
	whenever $E'$ and $F'$ have the Radon-Nikodym property and, moreover,
	every bounded linear operator from $E$ to $F'$ is compact.
	By Pitt's theorem~\cite[42.3.(10)]{Koethe79} this is the case
	for $E = \ell^p$ and $F = \ell^q$, where
	$1 \le q' < p < \infty$, $\frac{1}{q} + \frac{1}{q'} = 1$.
	There are also some other classes of Banach spaces that
	enjoy these properties (so that Corollary~\ref{repr:cor} can
	be applied), including some
	$L^p$-spaces~\cite{Ros69}, Orlicz sequence spaces~\cite[p.~149]{LinTza77},
	and Lorentz sequence spaces~\cite{AO97}.
\end{rem}

\section{Multiplication Operators}\label{multops}
In this section we apply our techniques to obtain a result that extends
Theorem~2.3 in~\cite{AreTho05} to non-separable dual spaces.
Let $E$ be a Banach space,
$(\Omega,\Sigma,\mu)$ a strictly localizable, complete measure space,
and $p \in [1,\infty]$,
and consider the space
\[
	L^\infty(\Omega; \mathscr{L}_s(E)) \coloneqq
		\{ M\colon \Omega \to \mathscr{L}(E) : M(\cdot)x \in L^\infty(\Omega;E) \text{ for all } x \in E \}.
\]
For every $M \in L^\infty(\Omega; \mathscr{L}_s(E))$,
$(\mathcal{M}_Mf)(\omega) \coloneqq M(\omega)f(\omega)$ defines a
bounded linear operator $\mathcal{M}_M \in \mathscr{L}(L^p(\Omega))$,
even if $E$ is not separable. These operators are called
\emph{multiplication operators on $L^p(\Omega;E)$}.
We remark that for non-separable $E$ the space $L^\infty(\Omega; \mathscr{L}_s(E))$
cannot be normed in the same way as in~\cite{AreTho05}, compare Example~\ref{nonorm}.

We call an operator $T \in \mathscr{L}(L^p(\Omega;E))$ \emph{local}
if for all $f \in L^p(\Omega;E)$ we have
\[
	Tf = 0 \text{ almost everywhere on } \{ \omega : f(\omega) = 0 \}.
\]
This is equivalent to the condition that
\[
	T(\setone_A f) = \setone_A Tf \quad \text{ for all } f \in L^p(\Omega; E) \text{ and all } A \in \Sigma.
\]
It is obvious that multiplication operators are local.
In~\cite{AreTho05} the authors proved the following theorem.
\begin{thm}[Arendt, Thomaschewski]\label{multAT}
	Let $(\Omega,\Sigma,\mu)$ be $\sigma$-finite and $E$ be separable.
	Then for every local operator $T \in \mathscr{L}(L^p(\Omega;E))$
	there exists $M \in L^\infty(\Omega; \mathscr{L}_s(E))$ such
	that $\mathcal{M}_M = T$.
\end{thm}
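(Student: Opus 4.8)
The plan is to read off the multiplier $M$ pointwise from the action of $T$ on functions of the product form $\setone_A x$, using locality to make this consistent and separability to upgrade the almost-everywhere estimates obtained for individual vectors to a genuine bound on the operator norm $\omega\mapsto\|M(\omega)\|$. Fix once and for all an exhaustion $\Omega_1\subseteq\Omega_2\subseteq\cdots$ with $\mu(\Omega_k)<\infty$ and $\bigcup_k\Omega_k=\Omega$, which exists by $\sigma$-finiteness, together with a countable dense subset $\{x_n\}$ of $E$.

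First I would define the candidate multiplier by $M(\omega)x \coloneqq (T(\setone_{\Omega_k}x))(\omega)$ for $\omega\in\Omega_k$ and $x\in E$. Locality makes this consistent and well behaved: for $A\in\Sigma$ with $A\subseteq\Omega_k$ one has $T(\setone_A x)=T(\setone_A\setone_{\Omega_k}x)=\setone_A\,T(\setone_{\Omega_k}x)$, so that $(T(\setone_A x))(\omega)=M(\omega)x$ for almost every $\omega\in A$ while $(T(\setone_A x))(\omega)=0$ for almost every $\omega\notin A$. In particular the value $M(\omega)x$ does not depend on the chosen $k$ with $\omega\in\Omega_k$, and the linearity of $T$ shows that $x\mapsto M(\omega)x$ is additive and homogeneous for almost every $\omega$, once one passes to a common null set for the countably many identities arising from the rational linear span of $\{x_n\}$.

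The crux of the argument, and the step I expect to be the main obstacle, is the \emph{pointwise norm estimate} $\esssup_{\omega}\|M(\omega)\|_{\mathscr{L}(E)}\le\|T\|$. For a fixed vector $x$ and any $A\in\Sigma$ of finite positive measure the relations above yield
\[
	\int_A\|M(\omega)x\|_E^p\,\dx\mu(\omega)=\|T(\setone_A x)\|_p^p\le\|T\|^p\,\mu(A)\,\|x\|^p
\]
(with the obvious modification for $p=\infty$). As this averaged inequality holds for \emph{every} such $A$, a standard argument in a $\sigma$-finite space forces $\|M(\omega)x\|_E\le\|T\|\,\|x\|$ for almost every $\omega$. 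The genuine difficulty is that the exceptional null set depends on $x$, and this is exactly where separability is indispensable: applying the estimate to every $x_n$ and intersecting the countably many null sets produces a single conull set on which $y\mapsto M(\omega)y$ is a bounded rational-linear map of norm at most $\|T\|$ on the dense span of $\{x_n\}$. On this set $M(\omega)$ extends uniquely to an operator in $\mathscr{L}(E)$ with $\|M(\omega)\|\le\|T\|$, and a routine approximation shows that $M(\omega)x=(T(\setone_{\Omega_k}x))(\omega)$ for every $x\in E$ almost everywhere; hence $M(\cdot)x\in L^\infty(\Omega;E)$ and $M\in L^\infty(\Omega;\mathscr{L}_s(E))$.

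It then remains to identify $\mathcal{M}_M$ with $T$. By construction $(T(\setone_A x))(\omega)=\setone_A(\omega)\,M(\omega)x=(\mathcal{M}_M(\setone_A x))(\omega)$, and linearity propagates this to all simple functions $\sum_i\setone_{A_i}x_i$. For $p<\infty$ the simple functions are dense in $L^p(\Omega;E)$ and both $T$ and $\mathcal{M}_M$ are bounded, so the two operators coincide on all of $L^p(\Omega;E)$; the case $p=\infty$, where norm density of simple functions fails, I would handle by a separate approximation argument. Apart from that endpoint subtlety, I expect the only real work to be the norm estimate together with the bookkeeping of null sets, which is precisely where the hypotheses of $\sigma$-finiteness and separability of $E$ enter.
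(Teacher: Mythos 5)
Your argument is correct in outline, but note that the paper does not actually prove Theorem~\ref{multAT}: it quotes it from Arendt and Thomaschewski~\cite{AreTho05}, remarking only that their proof ``heavily exploits the separability of $E$ to construct a kind of vector-valued linear lifting'' --- which is precisely your countable-dense-set-plus-null-set bookkeeping, so what you have written is essentially a reconstruction of the original argument. The closest proof contained in the paper is that of Theorem~\ref{multMN}, and the comparison is instructive: the central norm estimate there is exactly your Chebyshev-type locality estimate (integrate $\|M_{\alpha,v'}(\omega)\|^p$ over the set $B_\eps$ where it exceeds $(\|T\|+\eps)\|v'\|$, use locality in the form $\setone_{B_\eps}\,T(\setone_{A_\alpha}v') = T(\setone_{B_\eps}v')$, and let $\eps \to 0$), but where you intersect countably many null sets coming from a dense sequence $\{x_n\}$, the paper instead applies a scalar linear lifting $\rho$ to each scalar function $\apply{M_{\alpha,v'}(\cdot)}{v}$. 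The lifting dispenses with separability, but at a price: the lifted data are only the bilinear forms $(v',v) \mapsto M_{\alpha,v',v}(\omega)$, which assemble into an operator on $E$ only through a duality $E = F'$, and the resulting multiplier is merely weak-$\ast$ measurable, i.e.\ lies in $L^\infty_{\sigma^\ast}(\Omega;\mathscr{L}(F'))$, so that $\mathcal{M}_M f \in L^p(\Omega;F')$ must be verified a posteriori by approximation (and can genuinely fail for multipliers not arising from a local $T$). Your separability route buys a strongly measurable multiplier and hence the cleaner conclusion $M \in L^\infty(\Omega;\mathscr{L}_s(E))$, while the paper's route buys non-separable dual spaces. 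Two loose ends you should tie up: for $p = \infty$, where simple functions are not dense, define $M(\omega)x \coloneqq (T(\setone_\Omega x))(\omega)$ directly (constant functions lie in $L^\infty(\Omega;E)$, so no exhaustion is needed) and approximate an arbitrary $f$ uniformly by countably valued functions, to each piece of which locality applies; and set $M(\omega) \coloneqq 0$ on the exceptional null set, since membership in $L^\infty(\Omega;\mathscr{L}_s(E))$ as defined in the paper requires $M$ to be defined on all of $\Omega$.
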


The proof of Arendt and Thomaschewski could easily be adopted to cover arbitrary strictly
localizable measure spaces. But it heavily exploits the separability of
$E$ to construct a kind of vector-valued linear lifting.
We modify their approach to obtain a similar result for non-separable \emph{dual} spaces.

To this end, we have to consider a different space of multiplicators.
Let $M \in L^\infty_{\sigma^\ast}(\Omega; \mathscr{L}(F'))$.
This means in particular that $\apply{M(\cdot)}{v' \otimes v} = \apply{M(\cdot)v'}{v}$
is measurable for all $v' \in F'$ and $v \in F$, compare
Definition~\ref{Linfsigmaast} and Lemma~\ref{tensordual}.
In this situation we define $(\mathcal{M}_Mf)(\omega) \coloneqq M(\omega)f(\omega)$
and obtain that $Mf\colon \Omega \to F'$ is weakly measurable. It is not
difficult to check that
\[
	\int_\Omega \bigl| \apply{(\mathcal{M}_Mf)(\omega)}{v} \bigr|^p \dx\mu(\omega)
		\le \|M\|_\infty^p \: \|v\|^p \: \|f\|_p^p
\]
for simple functions $f \in L^p(\Omega; F')$, hence
$\apply{\mathcal{M}_Mf}{v} \in L^p(\Omega)$ and $\|\apply{\mathcal{M}_Mf}{v}\|_p \le \|v\| \: \|f\|$
for all $f \in L^p(\Omega;F')$ and all $v \in F$.
Thus $\mathcal{M}_Mf \in L^p(\Omega;F')$ if and only if $\mathcal{M}_Mf$ is
almost separably valued, cf.~\cite[Corollary~1.1.3]{ABHN01}.
In this case, $\mathcal{M}_M$ is a bounded operator on $L^p(\Omega;F')$,
and $\|\mathcal{M}_M\| \le \|M\|_\infty$.

One can easily come up with examples that $\mathcal{M}_Mf \not\in L^p(\Omega;F')$
if $F'$ and $(\Omega,\Sigma,\mu)$ are rich enough, for example using~\cite[Example~1.1.5]{ABHN01}.
Thus not every multiplication operator with multiplier in $L^\infty_{\sigma^\ast}(\Omega; \mathscr{L}(F'))$
defines a local operator. But still, every local operator is a multiplication operator in this sense
as the following theorem shows.

\begin{thm}\label{multMN}
	Let $F$ be a Banach space. Then for every local operator $T \in \mathscr{L}(L^p(\Omega;F'))$
	there exists $M \in L^\infty_{\sigma^\ast}(\Omega; \mathscr{L}(F'))$ such that $\mathcal{M}_M = T$.
\end{thm}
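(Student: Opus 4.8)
The plan is to reconstruct the multiplier $M(\omega)$ pointwise from the action of $T$ on the elementary functions $\setone_A v'$, using the scalar linear lifting of Theorem~\ref{lifting} to make the construction simultaneously linear in the (uncountably many) directions $v'\in F'$, which is exactly where the separability hypothesis in Theorem~\ref{multAT} is spent.

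First I would exploit locality to produce, for each $v'\in F'$, a single $F'$-valued ``symbol'' $g_{v'}$. For $A\in\Sigma$ of finite measure, $\setone_A v'\in L^p(\Omega;F')$, and locality gives $T(\setone_B v')=\setone_B\,T(\setone_A v')$ whenever $B\subseteq A$; hence the functions $T(\setone_A v')$ are consistent on their supports and, since $\Omega$ is strictly localizable, patch to a strongly measurable $g_{v'}\colon\Omega\to F'$ with $T(\setone_A v')=\setone_A g_{v'}$ for every finite-measure $A$. The map $v'\mapsto g_{v'}$ is clearly linear. The crucial quantitative input is that $g_{v'}$ is \emph{bounded}: applying locality to the superlevel set $A_\lambda\coloneqq\{\omega\in A:\|T(\setone_A v')(\omega)\|_{F'}>\lambda\}$ gives $\lambda\,\mu(A_\lambda)^{1/p}\le\|T(\setone_{A_\lambda}v')\|_p\le\|T\|\,\|v'\|\,\mu(A_\lambda)^{1/p}$ (with the obvious modification for $p=\infty$), so $A_\lambda$ is null once $\lambda>\|T\|\,\|v'\|$; thus $\|g_{v'}(\omega)\|_{F'}\le\|T\|\,\|v'\|$ almost everywhere.

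Next I would fix a scalar linear lifting $\rho$ on $L^\infty(\Omega)$ and define
\[
	\apply[F',F]{M(\omega)v'}{v}\coloneqq\rho\bigl(\apply[F',F]{g_{v'}(\cdot)}{v}\bigr)(\omega),\qquad v'\in F,\ v\in F.
\]
Linearity of $\rho$ and of $v'\mapsto g_{v'}$ makes $(v',v)\mapsto\apply{M(\omega)v'}{v}$ bilinear for each fixed $\omega$, and $|\rho(h)(\omega)|\le\|h\|_\infty$ together with the bound above yields $|\apply{M(\omega)v'}{v}|\le\|T\|\,\|v'\|\,\|v\|$. Hence $M(\omega)$ is a well-defined element of $\mathscr{L}(F')$ with $\|M(\omega)\|\le\|T\|$ for \emph{every} $\omega$; as $\omega\mapsto\apply{M(\omega)v'}{v}$ is measurable by construction, $M$ is $\sigma(\mathscr{L}(F'),F'\tilde{\otimes}_\pi F)$-measurable, and the uniform bound gives $M\in L^\infty_{\sigma^\ast}(\Omega;\mathscr{L}(F'))$ with $\|M\|_\infty\le\|T\|$. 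It then remains to check $\mathcal{M}_M=T$, and it suffices to do so on simple functions: for $f=\setone_A v'$ one has $Tf=\setone_A g_{v'}$, so the claim reduces to $M(\omega)v'=g_{v'}(\omega)$ for almost every $\omega\in A$, and the general case follows by approximating $f$ by simple functions (valid for all $p\in[1,\infty]$), using continuity of each $M(\omega)$ and boundedness of $T$; this argument simultaneously shows $\mathcal{M}_Mf\in L^p(\Omega;F')$.

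The main obstacle is to upgrade the identity $\apply{M(\omega)v'}{v}=\apply{g_{v'}(\omega)}{v}$, which the lifting delivers a.e.\ for each \emph{fixed} $v$, to a genuine identity in $F'$ for almost every $\omega$ — precisely the gap between $\sigma^\ast$-equivalence and almost-everywhere equality exhibited in Example~\ref{nonorm}. Here the linearity of the lifting does the decisive work: letting $G$ be the separable closed span of the essential range of $g_{v'}$, any $v\in F$ annihilating $G$ makes $\apply{g_{v'}(\cdot)}{v}$ the zero class, whence $\rho(0)=0$ forces $\apply{M(\omega)v'}{v}=0$ for \emph{every} $\omega$, confining $M(\cdot)v'$ to the annihilator of $G_\perp$; combining this with the a.e.\ agreement against a countable norming family for $G$ pins $M(\omega)v'$ down to $g_{v'}(\omega)$. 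I expect the careful bookkeeping of these null sets and norming functionals, rather than the construction of $M$ itself, to be the technically delicate part of the proof.
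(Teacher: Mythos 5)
Your construction is essentially the paper's: the symbols $T(\setone_{A_\alpha}v')$ on a strictly localizable decomposition, the locality argument on superlevel sets giving the essential bound $\|T\|\,\|v'\|$, the scalar lifting $\rho$ to produce a bilinear form $\apply{M(\omega)v'}{v}$ bounded by $\|T\|\,\|v'\|\,\|v\|$ at \emph{every} $\omega$, gluing over the pieces, totality of the simple tensors in $F'\tilde{\otimes}_\pi F$, and finally identification on simple functions plus density. All of that is sound. The genuine gap is in your last paragraph, precisely the step you flag as delicate. From (a) $M(\omega)v'-g_{v'}(\omega)\in (G_\perp)^\perp$ for a.e.\ $\omega$ and (b) vanishing of this difference against a countable family in $F$ that is norming for $G$, you cannot conclude that the difference is zero: $(G_\perp)^\perp$ is the $\sigma(F',F)$-closure of $G$, canonically the dual of $F/G_\perp$, and a countable subset of $F$ separates the points of $(F/G_\perp)'$ if and only if its closed span in $F/G_\perp$ is everything, i.e., if and only if $F/G_\perp$ is separable -- a family norming for the separable subspace $G$ is far too weak. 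Concretely, take $F=\ell^\infty$ and $G=\ell^1\subseteq(\ell^\infty)'$ under the canonical pairing: $G$ is separable, $G_\perp=\{0\}$, so $(G_\perp)^\perp=(\ell^\infty)'$, and \emph{no} countable subset of $\ell^\infty$ separates the points of $(\ell^\infty)'$, since by Hahn--Banach that would force $\ell^\infty$ to be separable. Note also that you attribute the decisive work to the \emph{linearity} of the lifting; linearity alone cannot close this gap.

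The claim you need is nevertheless true, and the correct repair uses the strong measurability of $g_{v'}$ together with the \emph{positivity} of $\rho$ rather than annihilators and norming families. Since $g_{v'}$ is essentially separably valued and bounded on each piece, choose countably valued $s_k=\sum_j \setone_{B_{kj}}g_{kj}$ with $\|g_{v'}-s_k\|_{F'}\le 1/k$ almost everywhere. Positivity and $\rho(1)=1$ give the pointwise bound $|\rho(h)(\omega)|\le\|h\|_{L^\infty}$ for \emph{every} $\omega$; moreover, on the $v$-independent conull subset of $B_{kj}$ where $\rho(\setone_{B_{kj}})(\omega)=1$, positivity yields $|\rho(h\setone_{\Omega\setminus B_{kj}})(\omega)|\le\|h\|_{L^\infty}\,\rho(\setone_{\Omega\setminus B_{kj}})(\omega)=0$, whence $\rho(\apply{s_k(\cdot)}{v})(\omega)=\apply{s_k(\omega)}{v}$ for \emph{all} $v\in F$ simultaneously. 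Off the countable union of these null sets, $|\apply{M(\omega)v'-g_{v'}(\omega)}{v}|\le 2\|v\|/k$ for all $k$ and all $v$, so $M(\omega)v'=g_{v'}(\omega)$ in $F'$ almost everywhere, with one exceptional null set per $v'$. This is exactly the content compressed into the paper's phrase that $\mathcal{M}_Mf=Tf$ for simple functions holds ``by construction''; with this argument substituted for your annihilator/norming step, your proof coincides with the paper's.
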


\begin{proof}
	Since $(\Omega,\Sigma,\mu)$ is strictly localizable, there exists a lifting
	$\rho\colon L^\infty(\Omega) \to \mathcal{M}^\infty(\Omega)$ and
	a family $(A_\alpha)_{\alpha \in I}$ of mutually disjoint sets of finite measure in $\Sigma$,
	covering $\Omega$, such that $B \subset \Omega$ is measurable if (and only if) $B \cap A_\alpha$ is measurable
	for all $\alpha$, and $\mu(B) = \sum_\alpha \mu(B \cap A_\alpha)$.

	Let $T \in \mathscr{L}(L^p(\Omega;F')$ be local. Fix $\alpha \in I$ and $v' \in F'$
	and define $M_{\alpha,v'} \coloneqq T(\setone_{A_\alpha} v')$, which lies in $L^p(\Omega;F')$.
	We show that even $M_{\alpha,v'} \in L^\infty(\Omega;F')$. In fact,
	let $\eps > 0$ and define
	\[
		B_\eps \coloneqq \{ \omega \in A_\alpha : \|M_{\alpha,v'}(\omega)\| \ge (\|T\| + \eps) \|v'\| \}.
	\]
	Since $T$ is local,
	\begin{align*}
		\mu(B_\eps) (\|T\| + \eps)^p \|v'\|^p
			& \le \int_{A_\alpha} \setone_{B_\eps} \|M_{\alpha,v'}(\omega)\|^p \dx\mu(\omega)
			= \int_{A_\alpha} \|T(\setone_{B_\eps} v')(\omega)\|^p \dx\mu(\omega) \\
			& \le \|T\|^p \|\setone_{B_\eps} v'\|_p^p
			= \|T\|^p \mu(B_\eps) \|v'\|^p.
	\end{align*}
	This is only possible if $v' = 0$ or $\mu(B_\eps) = 0$.
	In both cases we obtain
	$\|M_{\alpha,v'}(\omega)\| \le \|T\| \: \|v'\|$ for almost
	all $\omega \in \Omega$ by letting $\eps$ tend to $0$.

	Now define $M_{\alpha,v',v} \coloneqq \rho \circ \apply{M_{\alpha,v'}(\cdot)}{v} \in \mathcal{M}^\infty(\Omega)$.
	Then $M_{\alpha,v',v}(\omega)$ is a bilinear function of $v$ and $v'$ and
	$|M_{\alpha,v',v}(\omega)| \le \|T\| \: \|v'\| \: \|v\|$. Hence
	$\apply{M_\alpha(\omega)v'}{v} \coloneqq M_{\alpha,v',v}$
	defines an element $M_\alpha(\omega) \in \mathscr{L}(F')$ such that $\|M_\alpha(\omega)\| \le \|T\|$
	for every $\omega \in \Omega$.
	Define $M(\omega) \coloneqq M_\alpha(\omega)$ for $\omega \in A_\alpha$.
	Then $M\colon \Omega \to \mathscr{L}(F')$ is well-defined, bounded, and
	$\apply{M(\cdot)v'}{v} = \apply{M(\cdot)}{v' \otimes v}$
	is measurable for all $v$ thanks to the properties of $A_\alpha$.

	Since the simple tensors are total in $F' \tilde{\otimes}_\pi F$,
	this shows that $M$ is in $\mathcal{L}_{\sigma^\ast}(\Omega; \mathscr{L}(F'))$
	and can thus be considered to be an element of $L^\infty_{\sigma^\ast}(\Omega;F')$.
	By construction and because $T$ and $\mathcal{M}_M$ are local operators,
	$\mathcal{M}_Mf = Tf$ for all simple functions.
	Approximating arbitrary $f \in L^p(\Omega;F')$ by simple functions,
	this implies that $\mathcal{M}_Mf \in L^p(\Omega;F')$ and $Tf = \mathcal{M}_Mf$
	for all $f \in L^p(\Omega;F')$.
\end{proof}

Finally, we show that Theorems~\ref{multAT} and~\ref{multMN} give the
same result in the cases in which both of them apply.
\begin{prop}
	Let $E = F'$ be a separable dual space. Then
	$L^\infty_{\sigma^\ast}(\Omega; \mathscr{L}(F')) = L^\infty(\Omega; \mathscr{L}_s(F'))$.
\end{prop}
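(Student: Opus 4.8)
The plan is to prove the asserted identity as an equality of spaces of equivalence classes of functions, by establishing the two set-theoretic inclusions and then checking that the two almost-everywhere equivalences coincide. Throughout I write $G \coloneqq F' \tilde{\otimes}_\pi F$, so that $\mathscr{L}(F') = G'$ by Lemma~\ref{tensordual} and $L^\infty_{\sigma^\ast}(\Omega;\mathscr{L}(F'))$ is exactly the space of Definition~\ref{Linfsigmaast} built over this predual. As noted before Theorem~\ref{multMN}, the $\sigma(G',G)$-measurability of $M$ is equivalent to measurability of $\omega\mapsto\apply{M(\omega)v'}{v}$ for all $v'\in F'$ and $v\in F$, since the simple tensors $v'\otimes v$ are total in $G$ and the $\sigma^\ast$-bound lets one pass from them to all of $G$ by uniform approximation.

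First I would dispose of the inclusion $L^\infty(\Omega;\mathscr{L}_s(F')) \subseteq L^\infty_{\sigma^\ast}(\Omega;\mathscr{L}(F'))$, which needs no separability. If $M(\cdot)v'\in L^\infty(\Omega;F')$ for every $v'$, then $M(\cdot)v'$ is strongly, hence weakly, measurable, so $\omega\mapsto\apply{M(\omega)v'}{v}$ is measurable. For the required bound, the linear map $v'\mapsto M(\cdot)v'$ from $F'$ into $L^\infty(\Omega;F')$ has closed graph: if $v'_n\to v'$ in $F'$ and $M(\cdot)v'_n\to g$ in $L^\infty(\Omega;F')$, then $M(\omega)v'_n\to g(\omega)$ for a.e.\ $\omega$, while $M(\omega)v'_n\to M(\omega)v'$ pointwise by continuity of $M(\omega)$, forcing $g=M(\cdot)v'$. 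Hence this map is bounded by some $C$, giving $|\apply{M(\omega)v'}{v}|\le C\|v'\|\,\|v\|$ off a null set and therefore $\|M\|_{\sigma^\ast}\le C<\infty$.

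The substantial inclusion is the reverse one, and this is where separability enters. Given $M\in\mathcal{L}_{\sigma^\ast}(\Omega;\mathscr{L}(F'))$ I must show that, for each fixed $v'$, the weakly measurable, essentially bounded function $\omega\mapsto M(\omega)v'$ is in fact \emph{strongly} measurable. Since $F'$ is separable, so is its predual $F$; fixing a sequence $(v_n)$ dense in the unit ball of $F$ one has $\|x'\|_{F'}=\sup_n|\apply{x'}{v_n}|$ for all $x'\in F'$, exactly as in~\eqref{badnorm}. Then $M(\cdot)v'$ is weakly measurable against the countable norming family $(v_n)$ and takes values in the separable space $F'$, so the Pettis measurability theorem (cf.~\cite[\S 1.1]{ABHN01}) yields strong measurability. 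Essential boundedness follows from $\|M(\omega)v'\|=\sup_n|\apply{M(\omega)v'}{v_n}|\le\|M\|_{\sigma^\ast}\|v'\|$ off the countable union of the null sets attached to each $v_n$. Hence $M(\cdot)v'\in L^\infty(\Omega;F')$ for every $v'$, i.e.\ $M\in L^\infty(\Omega;\mathscr{L}_s(F'))$.

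Finally I would verify that the two equivalence relations agree: if $\|M\|_{\sigma^\ast}=0$, then choosing in addition a countable dense set $(v'_k)$ in $F'$ and using the norming sequence $(v_n)$, the relations $\apply{M(\omega)v'_k}{v_n}=0$ hold simultaneously off a single null set, which forces $M(\omega)=0$ there by density and continuity; the converse is trivial. Thus the underlying sets of equivalence classes coincide, with both norms given by $\esssup_{\omega}\|M(\omega)\|_{\mathscr{L}(F')}$. The main obstacle is precisely the upgrade from $\sigma(F',F)$-measurability to strong measurability of $M(\cdot)v'$: this is exactly what fails in the non-separable case (compare Example~\ref{nonorm}) and is rescued here by Pettis's theorem together with the separability of $F'$.
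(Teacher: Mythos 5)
Your proposal is correct and takes essentially the same route as the paper's proof: both rest on the separability of $F'$ (hence of $F$ and, via totality of the simple tensors, of $F'\tilde{\otimes}_\pi F$) combined with Pettis's measurability theorem (\cite[Corollary~1.1.3]{ABHN01}) to pass between $\sigma(F',F)$-measurability and strong measurability of $M(\cdot)v'$. The only difference is one of detail, to your credit: your closed-graph argument makes explicit the uniform bound $\|M(\cdot)v'\|_\infty \le C\,\|v'\|$, and hence the essential boundedness of $\|M(\cdot)\|_{\mathscr{L}(F')}$, which the paper's chain of equivalences leaves implicit.
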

\begin{proof}
	A function $f$ is in $L^\infty_{\sigma^\ast}(\Omega; \mathscr{L}(F'))$
	if and only if $\|f(\cdot)\|$ is essentially bounded and the function
	$\apply{f(\cdot)}{v' \otimes v} = \apply{f(\cdot)v'}{v}$
	is measurable for all $v' \otimes v \in F' \tilde{\otimes}_\pi F$.
	Here we used that $F$ is separable and the simple tensors are total
	in $F' \tilde{\otimes}_\pi F$, compare also the discussion regarding~\eqref{badnorm}.
	By~\cite[Corollary~1.1.3]{ABHN01} this is the case
	if and only if $f(\cdot)v' \in L^\infty(\Omega;F')$ for all $v' \in F'$,
	i.e., $f \in L^\infty(\Omega;\mathscr{L}_s(F')$.
\end{proof}

\section{Positive Representations and Regular Representations}\label{possec}
In this section, we only consider real Banach lattices
and complete, strictly localizable measure spaces.
We denote the positive cone of a Banach lattice $E$ by $E_+$,
and we write $E'_+$ for the set of all positive linear functionals
on $E$.

By definition, a measurable function $f\colon \Omega \to E$ is
positive if $f(\omega) \in E_+$ for almost every $\omega \in \Omega$.
Note that this ordering makes $L^1(\Omega;E)$ into a Banach lattice
such that $|f|(\omega) = |f(\omega)|$ for almost every $\omega \in \Omega$.

For a function $f$ in $L^\infty_{\sigma^\ast}(\Omega;F')$ we say that $f$
is positive if there exists a representative of $f$ taking values only in $F'_+$,
or, which is the same thanks to the positivity of the linear lifting,
$\apply{k(\cdot)}{v} \in L^\infty_+(\Omega)$ for every $v \in F_+$.
This makes $L^\infty_{\sigma^\ast}(\Omega;F')$ isometrically order isomorphic
to $L^1(\Omega;F)'$ via the natural identification, see also the proof
of Theorem~\ref{DunfordPettis}. In particular, $L^\infty_{\sigma^\ast}(\Omega;F')$
is a Banach lattice.
On the subspace $L^\infty(\Omega;F')$ of $L^\infty_{\sigma^\ast}(\Omega;F')$,
this ordering coincides with the pointwise order.

The following two theorems relate the positivity of the operator
to the positivity of the representing kernel and multiplier as
defined in the previous sections, respectively.
In other words, these theorems say that, in the case of Banach lattices,
the identifications of Theorems~\ref{repr} and~\ref{multMN} are isometric lattice
isomorphisms~\cite[Theorem~2.15]{Ali06}.

\begin{thm}\label{reprpos}
	Under the assumptions of Theorem~\ref{repr}, let in addition $E$ and $F$ be Banach lattices.
	Then $k$ is positive if and only if $T_k$ is positive.
\end{thm}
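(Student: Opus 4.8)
The plan is to prove the two implications separately, with the forward direction ($k$ positive $\Rightarrow T_k$ positive) being a routine unravelling of the definitions and the converse carrying the real content. Throughout I use that a positive element $k(\omega_1,\omega_2)\in\mathscr{L}(E,F')$ is one with $\apply[F',F]{k(\omega_1,\omega_2)u}{v}\ge 0$ for all $u\in E_+$ and $v\in F_+$, and that positivity of an element of $L^\infty_{\sigma^\ast}(\Omega_2;F')$ is tested by pairing with $v\in F_+$, as explained at the start of this section.

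For the easy direction, suppose $k$ admits a representative with values in the positive operators. Given $f$ with $f(\omega_1)\in E_+$ for almost every $\omega_1$ and any $v\in F_+$, formula~\eqref{repr:formula} gives
\[
	\apply[F',F]{(T_kf)(\omega_2)}{v}=\int_{\Omega_1}\apply[F',F]{k(\omega_1,\omega_2)f(\omega_1)}{v}\,\dx\mu_1(\omega_1),
\]
and the integrand is non-negative because $k(\omega_1,\omega_2)$ is a positive operator and $f(\omega_1)\in E_+$. Hence $\apply{(T_kf)(\cdot)}{v}\in L^\infty_+(\Omega_2)$ for every $v\in F_+$, which is precisely positivity of $T_kf$, so $T_k$ is positive.

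For the converse I would first reduce to a scalar statement. Assume $T_k$ is positive, fix $u\in E_+$, $v\in F_+$ and a set $A_1\subseteq\Omega_1$ of finite measure. Then $\setone_{A_1}u$ is positive, so $T_k(\setone_{A_1}u)\ge 0$ in $L^\infty_{\sigma^\ast}(\Omega_2;F')$; testing against $v$ and using~\eqref{repr:formula} yields
\[
	\int_{A_1}\apply[F',F]{k(\omega_1,\omega_2)u}{v}\,\dx\mu_1(\omega_1)\ge 0\qquad\text{for almost every }\omega_2.
\]
Integrating over a finite-measure set $A_2\subseteq\Omega_2$ shows that the bounded scalar function $\psi_{u,v}\coloneqq\apply[F',F]{k(\cdot)u}{v}\in L^\infty(\Omega_1\times\Omega_2)$ has non-negative integral over every finite-measure rectangle. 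A measure-theoretic lemma---proved by approximating an arbitrary measurable subset of a finite-measure tile $A_\alpha\times A_\beta$ of the strictly localizable product, in measure, by finite disjoint unions of rectangles, on which $\psi_{u,v}$ integrates to something non-negative---then upgrades this to $\psi_{u,v}\ge 0$ almost everywhere on $\Omega_1\times\Omega_2$, for each fixed pair $(u,v)$. This is the tiresome but routine part.

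The main obstacle is now exactly the phenomenon of Example~\ref{nonorm}: the exceptional null set in ``$\psi_{u,v}\ge 0$ a.e.'' depends on $(u,v)$, and since $E_+\times F_+$ need not be separable we cannot intersect countably many null sets to obtain a single positive representative. This is where the linear lifting $\rho$ from the proof of Theorem~\ref{DunfordPettis} is indispensable. Realising $\mathscr{L}(E,F')$ as $(E\tilde{\otimes}_\pi F)'$ and applying the surjectivity construction of Theorem~\ref{DunfordPettis} to the avatar $w\mapsto\apply{k(\cdot)}{w}$ of $T_k$ in $\mathscr{L}(E\tilde{\otimes}_\pi F,L^\infty(\Omega_1\times\Omega_2))$, one recovers the representative $k$ in the explicit pointwise form $\apply{k(\omega)}{u\otimes v}=\rho(\psi_{u,v})(\omega)$, valid at \emph{every} $\omega\in\Omega_1\times\Omega_2$. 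Since $\rho$ is positive and $\psi_{u,v}\ge 0$ almost everywhere, $\rho(\psi_{u,v})(\omega)\ge 0$ for every $\omega$; that is, for each fixed $u\in E_+$, $v\in F_+$ the inequality $\apply{k(\omega)u}{v}\ge 0$ holds at every point. As universal quantifiers commute, this states that $k(\omega)$ is a positive operator for every $\omega$, so this representative takes values in the positive operators and $k$ is positive. The lifting thus performs the crucial conversion of a family of almost-everywhere statements into a single everywhere statement, which is exactly what the non-separable setting demands and what an argument via countable dense subsets could not deliver.
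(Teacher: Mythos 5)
Your proof is correct and takes essentially the same route as the paper's: the forward direction by inspecting \eqref{repr:formula}, and the converse by testing $T_k$ against functions $\setone_{A}u$ with $u\in E_+$ and pairing with $v\in F_+$ to obtain $\apply{k(\cdot,\cdot)u}{v}\ge 0$ almost everywhere, then applying the positive linear lifting to produce a representative of $k$ with values in the positive operators. Your rectangle-approximation step and the explicit identity $\apply{k(\omega)}{u\otimes v}=\rho(\psi_{u,v})(\omega)$ simply spell out details that the paper's terser proof leaves implicit.
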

\begin{proof}
	If $k$ is positive, then the right hand side of~\eqref{repr:formula}
	is non-negative whenever $f \in L^1_+(\Omega; E)$ and
	$v \in F_+$. This means $T_kf \ge 0$ almost everywhere,
	which is precisely the meaning of $T_k \ge 0$.

	If, on the other hand, $T_k \ge 0$, then the right hand
	side of~\eqref{repr:formula} is non-negative for every
	$f \in L^1_+(\Omega; E)$ and $v \in F_+$.
	Putting $f \coloneqq u \setone_A$ for an
	arbitrary measurable set $A$ and some $u \in E_+$, we see
	$\apply{k(\cdot,\cdot)u}{v} \ge 0$ almost everywhere.
	Applying a linear lifting
	we obtain a representative of $k$ taking values in the
	positive operators. Hence $k \ge 0$ by definition.
\end{proof}

\begin{thm}\label{multpos}
	Under the assumptions of Theorem~\ref{multMN}, let in addition $F$ be a Banach lattice.
	Then $T$ is positive if and only if $M$ is positive.
\end{thm}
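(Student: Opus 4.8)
The plan is to mirror the argument of Theorem~\ref{reprpos}, exploiting the explicit construction of the multiplier $M$ carried out in the proof of Theorem~\ref{multMN}. First I would pin down what positivity means on each side. Since $F$ is a Banach lattice, so is its dual $F'$, and positivity of an element $f \in L^p(\Omega;F')$ means $f(\omega) \in F'_+$ for almost every $\omega$. An operator $S \in \mathscr{L}(F')$ is positive precisely when $S F'_+ \subseteq F'_+$, which by the duality defining the order on $F'$ is equivalent to $\apply{Sv'}{v} \ge 0$ for all $v' \in F'_+$ and $v \in F_+$. Correspondingly, and exactly as in the scalar-cone discussion of Section~\ref{possec}, the multiplier $M \in L^\infty_{\sigma^\ast}(\Omega;\mathscr{L}(F'))$ is declared positive if it admits a representative with $M(\omega)$ a positive operator for almost every $\omega$; thanks to the positivity of the linear lifting this is the same as requiring $\apply{M(\cdot)v'}{v} \in L^\infty_+(\Omega)$ for all $v' \in F'_+$ and $v \in F_+$.

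The easy implication is that positivity of $M$ forces positivity of $T$. Choosing a representative with $M(\omega) \ge 0$ almost everywhere and taking $f \in L^p(\Omega;F')$ with $f \ge 0$, we have $(Tf)(\omega) = (\mathcal{M}_Mf)(\omega) = M(\omega)f(\omega) \in F'_+$ for almost every $\omega$, since each $M(\omega)$ maps the cone into itself. Hence $Tf \ge 0$, so $T$ is positive.

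For the converse I would reuse the construction verbatim. Assume $T \ge 0$. Recall that in the proof of Theorem~\ref{multMN} one sets $M_{\alpha,v'} \coloneqq T(\setone_{A_\alpha}v')$. For $v' \in F'_+$ the function $\setone_{A_\alpha}v'$ is positive in $L^p(\Omega;F')$, so positivity of $T$ yields $M_{\alpha,v'} \ge 0$, that is $M_{\alpha,v'}(\omega) \in F'_+$ for almost every $\omega$. Consequently, for $v \in F_+$ the scalar function $\apply{M_{\alpha,v'}(\cdot)}{v}$ is nonnegative almost everywhere. Here the positivity of the lifting $\rho$ is the decisive ingredient: applying $\rho$ to a nonnegative equivalence class produces an \emph{everywhere} nonnegative representative, so that $\apply{M_\alpha(\omega)v'}{v} = M_{\alpha,v',v}(\omega) = \rho(\apply{M_{\alpha,v'}(\cdot)}{v})(\omega) \ge 0$ for every $\omega$, every $v' \in F'_+$, and every $v \in F_+$. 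By the duality characterization above this says $M_\alpha(\omega) \ge 0$ as an operator on $F'$ for every $\omega$, and since $M(\omega) = M_\alpha(\omega)$ on $A_\alpha$ while the $A_\alpha$ cover $\Omega$, we conclude $M(\omega) \ge 0$ everywhere, i.e.\ $M$ is positive.

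I expect the main obstacle to be purely order-theoretic bookkeeping rather than any new analytic difficulty: one must make sure that positivity of an operator $S \in \mathscr{L}(F')$ is faithfully detected by the bilinear pairing on $F'_+ \times F_+$ (this is exactly where the Banach-lattice structure of the \emph{dual} $F'$ enters, together with the fact that $F'_+$ is the dual cone of $F_+$), and that the everywhere-positivity extracted from the positive lifting coincides with the notion of positivity fixed for $L^\infty_{\sigma^\ast}(\Omega;\mathscr{L}(F'))$ in Section~\ref{possec}. Once these identifications are in place, the argument is a direct transcription of the construction in Theorem~\ref{multMN} and of the proof of Theorem~\ref{reprpos}.
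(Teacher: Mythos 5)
Your proof is correct and follows essentially the same route as the paper: the easy direction via pointwise action of a positive representative, and the converse by rerunning the construction of Theorem~\ref{multMN} and invoking the positivity of the lifting $\rho$ to get $M_\alpha(\omega) \ge 0$ everywhere. Your added bookkeeping (the dual-cone characterization of positivity in $\mathscr{L}(F')$ via the pairing on $F'_+ \times F_+$) is a correct and harmless elaboration of what the paper leaves implicit.
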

\begin{proof}
	It is obvious that $\mathcal{M}_M \ge 0$ whenever $M \ge 0$.
	
	Now assume that $T = \mathcal{M}_M \ge 0$. We use the notation
	of the proof of Theorem~\ref{multMN}. Note that for $v' \ge 0$
	we have $M_{\alpha,v'} \ge 0$ almost everywhere and hence
	$M_{\alpha,v',v}(\omega) \ge 0$ for \emph{all} $\omega \in \Omega$
	if $v \ge 0$, since $\rho$ is positive. This means
	$M_\alpha(\omega) \ge 0$, hence $M(\omega) \ge 0$, for all
	$\omega \in \Omega$. Thus the representative $M$ constructed
	in the proof is positive if $T$ is positive, and the claim
	is proved.
\end{proof}

It is also possible to relate further order properties of $k$ and $T_k$.
Recall that an operator $T$ from $E$ to $F$ is called \emph{regular} if it
can be written as a difference of positive operators.
The space of all regular operators from $E$ to $F$ is denoted by $\mathscr{L}^r(E,F)$.
If $F$ is Dedekind complete, for example if $F$ is the dual of a Banach lattice, then
every regular operator $T$ from $E$ to $F$ has a modulus $|T| = \sup\{ T, -T \}$, and
$\|T\|_r \coloneqq \|\,|T|\,\|$ makes $\mathscr{L}^r(E,F)$ into a Dedekind complete Banach
lattice~\cite[Theorem~4.74]{Ali06}.

Let $T_k$ be defined via~\eqref{repr:formula}.
We will characterize the kernels $k$ such that $T_k$ is regular.
In fact, we will find an isometric lattice isomorphism between
$\mathscr{L}^r(L^1(\Omega_1;E), L^\infty_{\sigma^\ast}(\Omega_2;F'))$
and an appropriate space of kernels.

For this, we introduce the so-called $p$-tensor product of Banach lattices.
A pair $(H,\chi)$, where $H$ is a Banach lattice and $\chi$ is a positive bilinear
map from $E \times F$ to $H$ such that $\|\chi(u,v)\| = \|u\|\,\|v\|$,
is called a \emph{$p$-tensor product} of
$E$ and $F$ if for every Banach lattice $G$ and for every bounded,
positive\footnote{A bilinear map $\phi\colon E \times F \to G$ is called positive if
$\phi(u,v) \in G_+$ for all $u \in E_+$ and $v \in F_+$.}
bilinear map $\phi$ from $E \times F$ to $G$ there exists a unique bounded linear
operator $T$ from $H$ to $G$ such that the diagram
\begin{center}
\begin{tikzpicture}
	\centering
	\matrix (m) [matrix of math nodes, row sep=2.5em, column sep=4em, text height=1.5ex, text depth=0.25ex]
	{ E \times F & G \\ H & \\ };
	\path[->]
	(m-1-1) edge node[auto] {$\phi$} (m-1-2)
			edge node[auto,swap] {$\chi$} (m-2-1);
	\path[->,dashed]
	(m-2-1) edge node[auto,swap] {$T$} (m-1-2);
\end{tikzpicture}
\end{center}
commutes, and, moreover, this unique map $T$ is positive and satisfies $\|T\| = \|\phi\|$.

It is not hard to check that if $(H_1,\chi_1)$ and $(H_2,\chi_2)$ are $p$-tensor
products of $E$ and $F$, then there exists a (unique) isometric lattice isomorphism
$J$ from $H_1$ to $H_2$ such that $J \circ \chi_1 = \chi_2$.
Moreover, Schlotterbeck~\cite{Schlotterbeck} constructed a $p$-tensor product
for every pair of Banach lattices $E$ and $F$, compare also~\cite{AreDipl}.
Thus there is an essentially unique $p$-tensor product, which we
denote by $E \tilde{\otimes}_p F$, and we will write $u \otimes v$ for $\chi(u,v)$.
Since $T$ is unique, the linear span $E \otimes F$ of the image of $\chi$
is dense in $E \tilde{\otimes}_p F$.

Now fix $G = \mathds{R}$. A positive operator $R$ from $E$ to $F'$ can be
identified with the positive bilinear map $\phi_R$ defined as
$\phi_R(u,v) \coloneqq \apply{Ru}{v}$. By the universal property there
exists a unique element $\psi_R$ in $(E \tilde{\otimes}_p F)'$ such that
$\psi_R(u \otimes v) = \apply{Ru}{v}$ for all $u \in E$ and $v \in F$.
For a regular operator $S$, there exist positive operators $S_1$ and $S_2$
such that $S = S_1 - S_2$, and we define $\psi_S \coloneqq \psi_{S_1} - \psi_{S_2}$.
This definition does not depend on the choice of $S_1$ and $S_2$ and
is consistent with the definition for positive operators.

We claim that $S \mapsto \psi_S$ is an isometric lattice isomorphism from
$\mathscr{L}^r(E,F')$ to $(E \tilde{\otimes}_p F)'$.
It is not hard to see that the mapping is linear and that $\psi_S = 0$ only if $S = 0$.
It follows from the universal property that $S$ is positive if and only
if $\psi_S$ is positive. For $\psi \in (E \tilde{\otimes}_p F)'_+$,
the positive bilinear map $\phi \coloneqq \psi \circ \chi$
defines a positive operator $S$ via $\apply{Su}{v} \coloneqq \phi(u,v)$
such that $\psi = \psi_S$. Since the cone of $(E \tilde{\otimes}_p F)'$ is
generating, this shows that for every $\psi \in (E \tilde{\otimes}_p F)'$ there
exists a regular operator $S$ such that $\psi = \psi_S$.
Hence $S \mapsto \psi_S$ is a lattice isomorphism.
Since for functionals the regular norm and the operator norm coincide,
the universal property shows that
\[
	\|S\|_r = \|\,|S|\,\| = \| \phi_{|S|} \| = \|\psi_{|S|}\| = \|\,|\psi_S|\,\| = \|\psi_S\|,
\]
i.e., that $S \mapsto \psi_S$ is isometric.

Using the universal property, one can show that $\tilde{\otimes}_p$ is
commutative and associative via the natural identifications, for example
by extending the definition of the $p$-tensor product to $n$ factors
and showing that $(E \tilde{\otimes}_p F) \tilde{\otimes}_p G$
and $E \tilde{\otimes}_p (F \tilde{\otimes}_p G)$ are models of
$E \tilde{\otimes}_p F \tilde{\otimes}_p G$.

There is a striking similarity between the definitions of the
projective and the $p$-tensor product. In fact, the definitions
agree apart from the additional positivity conditions.
Thus the following result is not surprising.

\begin{lemma}
	Let $\Omega$ be a measure space and $F$ be a Banach lattice.
	Then $L^1(\Omega;F)$ is isometrically order isomorphic to
	$L^1(\Omega) \tilde{\otimes}_p F$.
\end{lemma}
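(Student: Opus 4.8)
The plan is to show that the pair $(L^1(\Omega;F),\chi)$ is itself a $p$-tensor product of $L^1(\Omega)$ and $F$, where $\chi\colon L^1(\Omega) \times F \to L^1(\Omega;F)$ is the bilinear map $\chi(f,\eta) \coloneqq f(\cdot)\eta$ already considered in Lemma~\ref{L1E}. The assertion of the lemma then follows at once from the essential uniqueness of the $p$-tensor product stated earlier, which supplies an isometric lattice isomorphism intertwining the canonical maps, and this is exactly the claimed isometric order isomorphism.

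First I would record the structural prerequisites. The space $L^1(\Omega;F)$ is a Banach lattice under the pointwise order, as noted above. The map $\chi$ is bilinear and positive: if $f \in L^1_+(\Omega)$ and $\eta \in F_+$, then $f(\omega)\eta \in F_+$ for almost every $\omega$, so $\chi(f,\eta) \ge 0$. Moreover $\|\chi(f,\eta)\|_{L^1(\Omega;F)} = \int_\Omega |f(\omega)|\,\|\eta\|\,\dx\mu = \|f\|_1\,\|\eta\|$, which is in any case immediate from Lemma~\ref{L1E}. Thus $(L^1(\Omega;F),\chi)$ meets the normalization and the positivity requirements in the definition of the $p$-tensor product.

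It remains to verify the universal property. Let $G$ be a Banach lattice and $\phi\colon L^1(\Omega) \times F \to G$ a bounded positive bilinear map. Regarding $\phi$ merely as a bounded bilinear map into the underlying Banach space $G$, and invoking the identification $L^1(\Omega;F) \cong L^1(\Omega)\tilde{\otimes}_\pi F$ of Lemma~\ref{L1E} together with the universal property of the projective tensor product, I obtain a unique bounded linear $T\colon L^1(\Omega;F) \to G$ with $T \circ \chi = \phi$ and $\|T\| = \|\phi\|$. Existence, uniqueness, and the norm identity are therefore inherited for free from the projective tensor product; the only genuinely new point, and the crux of the argument, is that this same $T$ is positive.

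To establish positivity I would approximate in the positive cone by positive simple functions. A positive simple function in $L^1(\Omega;F)$ can be written as $\sum_i \setone_{A_i}\eta_i$ with the $A_i$ disjoint of finite measure and $\eta_i \in F_+$, that is, as a positive combination $\sum_i \chi(\setone_{A_i},\eta_i)$ of elementary positive tensors; hence $T\bigl(\sum_i \setone_{A_i}\eta_i\bigr) = \sum_i \phi(\setone_{A_i},\eta_i) \ge 0$ by positivity of $\phi$. Since every $h \in L^1_+(\Omega;F)$ is the $L^1$-limit of such positive simple functions (take the positive parts of the usual Bochner approximants, using that the lattice operations are contractive and the pointwise description of the order on $L^1(\Omega;F)$), continuity of $T$ together with closedness of the cone $G_+$ yields $Th \ge 0$. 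Thus $T$ is positive, which completes the verification that $(L^1(\Omega;F),\chi)$ is a $p$-tensor product, and the lemma follows. I expect this last positivity step to be the main obstacle: everything else is read off from the projective case, whereas here one must be careful that positive elements of the Bochner space are genuinely approximable by positive simple functions, which is precisely what makes the induced $T$ lattice-compatible.
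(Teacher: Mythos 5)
Your proof is correct, and it follows the paper's overall strategy---exhibiting $(L^1(\Omega;F),\chi)$ as a model of the $p$-tensor product and then invoking the essential uniqueness of that object---but you discharge the existence, uniqueness, and norm-equality parts by a genuinely different route: you import them wholesale from Lemma~\ref{L1E} together with the universal property of the \emph{projective} tensor product, noting that a bounded positive bilinear map into a Banach lattice is in particular a bounded bilinear map into a Banach space, so that positivity of the induced $T$ is the only new point to check. The paper instead re-runs the construction by hand: it defines $T$ on simple functions, proves $\|Tg\| \le \|\phi\| \, \|g\|_{L^1(\Omega;F)}$ directly for $g = \sum_{i=1}^n \setone_{A_i} v_i$ with disjoint $A_i$ (exploiting the additivity of the $L^1$-norm over disjointly supported pieces), extends by density, and obtains uniqueness and $\|T\| \ge \|\phi\|$ from the same formula on simple functions; its positivity step is then identical to yours. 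Your route is shorter and makes the structural point transparent---the $p$-tensor product of $L^1(\Omega)$ with $F$ is just the projective one enriched with the order, which matches the paper's own remark on the ``striking similarity'' of the two universal properties---while the paper's route is self-contained and in effect re-proves the relevant part of Lemma~\ref{L1E} along the way. A small merit of your write-up is that you justify the density of positive simple functions in $L^1_+(\Omega;F)$ via positive parts of Bochner approximants and the contractivity inequality $|s^+ - h^+| \le |s - h|$, a point the paper's proof uses without comment.
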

\begin{proof}
	It suffices to show that $(L^1(\Omega;F),\chi)$ with $\chi(f,v) \coloneqq f(\cdot)v$ has
	the universal property of the $p$-tensor product of $L^1(\Omega)$ and $F$.
	Then $\chi$ is positive and $\|\chi(f,v)\|_{L^1(\Omega;F)} = \|f\|_1 \, \|v\|$.
	Let $G$ be a Banach lattice and $\phi$ be a bounded, positive bilinear
	functional from $L^1(\Omega) \times F$ to $G$.

	If there exists a bounded linear operator $T$ as in the universal property,
	it satisfies
	\begin{equation}\label{simple:uniq}
		T\Bigl(\sum_{i=1}^n \setone_{A_i} v_i\Bigr) = \sum_{i=1}^n \phi(f_i,v_i)
	\end{equation}
	for all choices of measurable sets $A_i \subset \Omega$ and vectors $v_i \in F$.
	Since the simple functions are dense in $L^1(\Omega;F)$, this shows
	uniqueness of $T$, if it exists.
	Moreover, if a bounded $T$ satisfies~\eqref{simple:uniq},
	then $T \circ \chi = \phi$, since this holds on a dense subspace
	and $\chi$ and $\phi$ are continuous.
	Furthermore, if $T$ satisfies~\eqref{simple:uniq}, then
	\[
		\|T\| \ge \sup\bigl\{ T(\chi(f,v)) : \|f\|_1 \le 1, \; \|v\| \le 1 \bigr\}
			= \sup\bigl\{ \phi(f,v) : \|f\|_1 \le 1, \; \|v\| \le 1 \bigr\}
			= \|\phi\|.
	\]

	We still have to show that there exists a bounded, positive operator $T$
	that satisfies~\eqref{simple:uniq} and $\|T\| \le \|\phi\|$.
	Define $T$ on the simple functions by~\eqref{simple:uniq}.
	For $v_i \in F$ and disjoint, measurable sets $A_i \subset \Omega$ of positive measure,
	let $g \coloneqq \sum_{i=1}^n \setone_{A_i} v_i$. Then
	\[
		\| Tg \|
			= \Bigl\| \sum_{i=1}^n \phi(\setone_{A_i}, v_i) \Bigr\|
			\le \|\phi\| \sum_{i=1}^n \|\setone_{A_i}\| \, \|v_i\|
			= \|\phi\| \, \| g \|_{L^1(\Omega;F)}.
	\]
	Since such $g$ are dense in $L^1(\Omega;F)$,
	this shows that $T$ has a continuous extension to $L^1(\Omega;F)$ such that
	$\|T\| \le \|\phi\|$.
	From~\eqref{simple:uniq} we obtain that $Tg \in G_+$ if $g \ge 0$, i.e.,
	if $v_i \in F_+$ for all $i$. Since such functions $g$ are dense in
	$L^1_+(\Omega;F)$ and $G_+$ is closed in $G$, $T$ is positive.

	We have checked all conditions in the universal property.
	Hence $L^1(\Omega;F)$ and $L^1(\Omega) \tilde{\otimes}_p F$ are isometrically
	order isomorphic by the uniqueness of the $p$-tensor product.
\end{proof}

The lemma shows in particular that $L^1(\Omega) \tilde{\otimes}_p L^1(\Omega)$
is isomorphic to $L^1(\Omega \times \Omega)$, compare~\eqref{L1t} and Lemma~\ref{L1E}.

Now we are in the position to prove the announced theorem about the correspondence of the
regularity of the kernel and the operator.
\begin{thm}\label{regkernel}
	Let $\Omega_1$ and $\Omega_2$ be complete, strictly localizable measure spaces, and
	let $E$ and $F$ be Banach lattices.
	Then $k \mapsto T_k$ as defined in~\eqref{repr:formula} is an isometric lattice isomorphism from
	$L^\infty_{\sigma^\ast}(\Omega_1 \times \Omega_2; \mathscr{L}^r(E,F'))$
	onto $\mathscr{L}^r(L^1(\Omega_1;E), L^\infty_{\sigma^\ast}(\Omega_2;F'))$.
\end{thm}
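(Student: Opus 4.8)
The plan is to run the exact analogue of the chain of isomorphisms in the proof of Theorem~\ref{repr}, but now in the category of Banach lattices: everywhere I replace the projective tensor product $\tilde{\otimes}_\pi$ by the $p$-tensor product $\tilde{\otimes}_p$, the spaces $\mathscr{L}$ of bounded operators by the spaces $\mathscr{L}^r$ of regular operators, and ``isometric isomorphism'' by ``isometric \emph{lattice} isomorphism'', the isometry now being with respect to the regular norm. As in that proof, the first observation is that $\Omega_1 \times \Omega_2$ is again complete and strictly localizable, so that every space occurring below is well-defined. The replacements for the three ingredients used there are already at hand: the isometric lattice isomorphism $\mathscr{L}^r(A,B') \cong (A \tilde{\otimes}_p B)'$ constructed above (given by $S \mapsto \psi_S$) takes over the role of Lemma~\ref{tensordual}; the isometric order isomorphism $L^1(\Omega;G) \cong L^1(\Omega) \tilde{\otimes}_p G$ takes over the role of Lemma~\ref{L1E}; and $\tilde{\otimes}_p$ is commutative and associative through natural lattice isomorphisms. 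Finally, the part played by the weak Dunford--Pettis theorem is taken over by the isometric order isomorphism $L^\infty_{\sigma^\ast}(\Omega;H') \cong L^1(\Omega;H)'$ recorded at the start of this section.

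Concretely, I would start from the operator space and produce the chain
\begin{align*}
  \mathscr{L}^r(L^1(\Omega_1;E), L^\infty_{\sigma^\ast}(\Omega_2;F'))
    &\cong \mathscr{L}^r(L^1(\Omega_1;E), L^1(\Omega_2;F)') \\
    &\cong (L^1(\Omega_1;E) \tilde{\otimes}_p L^1(\Omega_2;F))' \\
    &\cong (L^1(\Omega_1) \tilde{\otimes}_p L^1(\Omega_2) \tilde{\otimes}_p E \tilde{\otimes}_p F)' \\
    &\cong (L^1(\Omega_1 \times \Omega_2) \tilde{\otimes}_p (E \tilde{\otimes}_p F))' \\
    &\cong L^1(\Omega_1 \times \Omega_2; E \tilde{\otimes}_p F)' \\
    &\cong L^\infty_{\sigma^\ast}(\Omega_1 \times \Omega_2; (E \tilde{\otimes}_p F)') \\
    &\cong L^\infty_{\sigma^\ast}(\Omega_1 \times \Omega_2; \mathscr{L}^r(E,F')),
\end{align*}
where the steps use, in order: the order isomorphism $L^\infty_{\sigma^\ast}(\Omega_2;F') \cong L^1(\Omega_2;F)'$; the identification $\mathscr{L}^r(A,B') \cong (A \tilde{\otimes}_p B)'$; the $p$-tensor description of $L^1(\Omega;\cdot)$ applied to both factors, together with commutativity and associativity of $\tilde{\otimes}_p$; the special case $L^1(\Omega_1) \tilde{\otimes}_p L^1(\Omega_2) \cong L^1(\Omega_1 \times \Omega_2)$ noted after that lemma; the $p$-tensor description of $L^1(\Omega;\cdot)$ once more (now in reverse, with $G = E \tilde{\otimes}_p F$); the order isomorphism $L^1(\Omega;H)' \cong L^\infty_{\sigma^\ast}(\Omega;H')$; and finally $(E \tilde{\otimes}_p F)' \cong \mathscr{L}^r(E,F')$ applied inside the value space. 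Since each link is an isometric lattice isomorphism, so is the composite.

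It then remains to identify the composite explicitly. As in Theorem~\ref{repr}, I would evaluate the composed isomorphism on elementary tensors $f \otimes \eta$ and against test functionals $u \otimes v$, and check that, read from the bottom line to the top, it recovers the map $k \mapsto T_k$ given by~\eqref{repr:formula}. This is the same routine (if tedious) bookkeeping of identifications as in the scalar-valued case, and I would simply note that it goes through verbatim. With this, the theorem follows: $k \mapsto T_k$, restricted to $\mathscr{L}^r(E,F')$-valued kernels, is an isometric lattice isomorphism onto $\mathscr{L}^r(L^1(\Omega_1;E), L^\infty_{\sigma^\ast}(\Omega_2;F'))$, the isometry being for the regular norms.

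The main obstacle I anticipate is not any single hard estimate but the \emph{order-theoretic bookkeeping}: I must make sure that each step already proved as a Banach-space isomorphism in Section~\ref{mainsec} is genuinely a lattice isomorphism here, and that the implicit preduals and the norms line up so that ``isometric'' always refers to the regular norm. The one place where the predual really matters is the last step, where $L^\infty_{\sigma^\ast}$ is formed with respect to the predual $E \tilde{\otimes}_p F$; there I would check that the induced $\sigma^\ast$-measurability requirement---measurability of $\omega \mapsto \langle k(\omega)u, v\rangle$ for all $u \in E$ and $v \in F$---is exactly the condition implicit in~\eqref{repr:formula}, which it is because the simple tensors $u \otimes v$ are total in $E \tilde{\otimes}_p F$.
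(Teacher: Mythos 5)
Your proposal is correct and takes essentially the same approach as the paper: the paper's proof likewise substitutes the $p$-tensor product for the projective one and runs the chain $L^\infty_{\sigma^\ast}(\Omega_1 \times \Omega_2; \mathscr{L}^r(E,F')) \cong L^1(\Omega_1 \times \Omega_2; E \tilde{\otimes}_p F)' \cong (L^1(\Omega_1;E) \tilde{\otimes}_p L^1(\Omega_2;F))' \cong \mathscr{L}^r(L^1(\Omega_1;E), L^\infty_{\sigma^\ast}(\Omega_2;F'))$ of isometric lattice isomorphisms, identifying the composition with $k \mapsto T_k$. You have merely written out the intermediate links (the duality $\mathscr{L}^r(A,B') \cong (A \tilde{\otimes}_p B)'$, the $p$-tensor description of $L^1(\Omega;\cdot)$, commutativity and associativity, and the predual bookkeeping in the last step) that the paper compresses into ``proceed as in the proof of Theorem~\ref{repr}''.
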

\begin{proof}
	Recall that $L^\infty_{\sigma^\ast}(\Omega;F')$ and $L^1(\Omega;F)'$
	are isometrically isomorphic.
	Using the properties of the $p$-tensor product, we can proceed as in the proof of
	Theorem~\ref{repr}. We obtain that
	\begin{align*}
		L^\infty_{\sigma^\ast}(\Omega_1 \times \Omega_2; \mathscr{L}^r(E,F'))
			& \cong L^1(\Omega_1 \times \Omega_2; E \tilde{\otimes}_p F)'
			\cong (L^1(\Omega_1;E) \tilde{\otimes}_p L^1(\Omega_2;F))' \\
			& \cong \mathscr{L}^r(L^1(\Omega;E), L^\infty_{\sigma^\ast}(\Omega_2;F'))
	\end{align*}
	is a chain of isometric lattice isomorphisms. The composition is given by $k \mapsto T_k$.
	This proves the claim.
\end{proof}

The theorem shows that the regular operators are precisely those
whose kernel takes values in the regular operators and is bounded
even with respect to the regular norm.
In particular, not even every kernel in $\mathrm{C}([0,1]^2; \mathscr{L}(E,F'))$
such that $k(x,y) \in \mathscr{L}^r(E,F')$ for all $(x,y) \in [0,1]^2$
defines a regular operator $T_k$. The next example shows how one
can construct such a kernel.

\begin{exa}
	Let $E \coloneqq F' \coloneqq H \coloneqq L^2(\mathds{T})$ with the natural ordering,
	where $\mathds{T}$ denotes the one dimensional torus.
	The space $\mathscr{L}^r(H)$ is not closed in $\mathscr{L}(H)$, see for
	example~\cite[Counterexample~3.7]{Are81}. Hence there exist regular
	operators $S_n$ such that $\|S_n\| \to 0$ and $\|S_n\|_r \to \infty$
	as $n \to \infty$. By linear interpolation we find a continuous function $f$
	from $[0,1]$ to $\mathscr{L}(H)$ taking values in $\mathscr{L}^r(H)$
	such that $\|f(x)\|_r \to \infty$ as $x \to 0$.
	Then $k(x,y) \coloneqq f(x)$ defines a function in $\mathrm{C}([0,1]^2;\mathscr{L}(H))$,
	$k(x,y) \in \mathscr{L}^r(H)$ for all $(x,y) \in [0,1]^2$,
	but $T_k$ is by Theorem~\ref{regkernel} not regular.
\end{exa}

\section{$L^p$-Representations}\label{lpsec}
In Theorem~\ref{repr} we have characterized the operators between spaces of vector-valued
functions that come from bounded kernels. In the scalar case this reduces to Theorem~\ref{Repr}.
For scalar-valued functions, there is one other class of kernels for which it is fairly simple
to describe the corresponding operators. In fact, every Hilbert-Schmidt operator from
$L^2(\Omega_1)$ to $L^2(\Omega_2)$ corresponds to a kernel $k \in L^2(\Omega_1 \times \Omega_2)$
via formula~\eqref{scalarrepform}.
This result can easily be generalized to the vector-valued case, too.

Here we denote the space of all Hilbert-Schmidt operators from a Hilbert space $H_1$
to another Hilbert space $H_2$ by $\mathscr{L}_2(H_1, H_2)$.
It is well-known that $\mathscr{L}_2(H_1,H_2)$ itself is again a Hilbert space.

\begin{thm}\label{rep2}
	Let $\Omega_1$ and $\Omega_2$ be $\sigma$-finite measure spaces,
	and let $E$ and $F$ be Hilbert spaces. Then the mapping $k\mapsto T_k$
	defined as in~\eqref{repr:formula} is an isometric isomorphism from
	$L^2(\Omega_1\times \Omega_2; \mathscr{L}_2(E,F))$ onto
	$\mathscr{L}_2(L^2(\Omega_1;E); L^2(\Omega_2;F))$.
\end{thm}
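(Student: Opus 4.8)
The plan is to mimic the proof of Theorem~\ref{repr}, replacing the projective tensor product $\tilde{\otimes}_\pi$ by the \emph{Hilbert space tensor product} $\tilde{\otimes}_2$, for which the Hilbert--Schmidt norm plays the role that the projective norm played before. I would rely on three standard facts about $\tilde{\otimes}_2$: for Hilbert spaces $H_1,H_2$ one has the canonical isometric identification $\mathscr{L}_2(H_1,H_2) \cong H_1' \tilde{\otimes}_2 H_2$, where $\ell \otimes w$ corresponds to the operator $x \mapsto \apply{\ell}{x}\,w$; for any measure space and any Hilbert space $H$ one has $L^2(\Omega;H) \cong L^2(\Omega) \tilde{\otimes}_2 H$; and, for $\sigma$-finite $\Omega_1,\Omega_2$, the $L^2$-analogue of~\eqref{L1t}, namely $L^2(\Omega_1) \tilde{\otimes}_2 L^2(\Omega_2) \cong L^2(\Omega_1 \times \Omega_2)$. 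In addition $\tilde{\otimes}_2$ is commutative and associative via the natural maps, and every isomorphism just listed is isometric. (In the complex case one reads $H_1'$ as the conjugate dual throughout; this does not affect the argument.)

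With these identifications in hand I would assemble the chain
\begin{align*}
	\mathscr{L}_2(L^2(\Omega_1;E), L^2(\Omega_2;F))
		& \cong L^2(\Omega_1;E)' \tilde{\otimes}_2 L^2(\Omega_2;F) \\
		& \cong \bigl(L^2(\Omega_1)' \tilde{\otimes}_2 E'\bigr) \tilde{\otimes}_2 \bigl(L^2(\Omega_2) \tilde{\otimes}_2 F\bigr) \\
		& \cong \bigl(L^2(\Omega_1)' \tilde{\otimes}_2 L^2(\Omega_2)\bigr) \tilde{\otimes}_2 \bigl(E' \tilde{\otimes}_2 F\bigr) \\
		& \cong L^2(\Omega_1 \times \Omega_2) \tilde{\otimes}_2 \mathscr{L}_2(E,F) \\
		& \cong L^2(\Omega_1 \times \Omega_2; \mathscr{L}_2(E,F)),
\end{align*}
in which the first line is the Hilbert--Schmidt identification, the second applies $L^2(\Omega;H) \cong L^2(\Omega)\tilde{\otimes}_2 H$ to both factors, the third only rearranges factors by commutativity and associativity, the fourth uses $L^2(\Omega_1)' \tilde{\otimes}_2 L^2(\Omega_2) \cong L^2(\Omega_1 \times \Omega_2)$ (via Riesz and the $L^2$-analogue of~\eqref{L1t}) together with $E' \tilde{\otimes}_2 F \cong \mathscr{L}_2(E,F)$, and the last is again $L^2(\Omega;H) \cong L^2(\Omega)\tilde{\otimes}_2 H$. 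Since every arrow is an isometric isomorphism of Hilbert spaces, the composition is an isometric isomorphism onto, as claimed.

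Exactly as in Theorem~\ref{repr}, the genuinely laborious point is to verify that this composition is \emph{the} map $k \mapsto T_k$ determined by~\eqref{repr:formula}; the structural chain above only produces \emph{some} isometric isomorphism, and I expect pinning down the identification to be the main obstacle. I would settle it by testing on elementary tensors: it suffices to treat rank-one kernels $k(\omega_1,\omega_2) = a(\omega_1)\,b(\omega_2)\,(\eta' \otimes \xi)$ with $a \in L^2(\Omega_1)$, $b \in L^2(\Omega_2)$, and $\eta' \otimes \xi \in \mathscr{L}_2(E,F)$ the rank-one operator $x \mapsto \apply{\eta'}{x}\,\xi$, since such kernels are total in $L^2(\Omega_1 \times \Omega_2;\mathscr{L}_2(E,F))$; for these one evaluates both the image under the chain and the right-hand side of~\eqref{repr:formula} and checks that they coincide. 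Along the way one must confirm that~\eqref{repr:formula} is meaningful for an $L^2$-kernel: for $k \in L^2(\Omega_1\times\Omega_2;\mathscr{L}_2(E,F))$ and $f \in L^2(\Omega_1;E)$ the inner integral is an $F$-valued Bochner integral, defined for $\mu_2$-almost every $\omega_2$ by Cauchy--Schwarz and Fubini, and the resulting function lies in $L^2(\Omega_2;F)$.

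Alternatively, and perhaps more transparently, the whole statement can be proved by a direct orthonormal-basis computation that sidesteps the tensor machinery. Choosing orthonormal bases $(g_a)$, $(e_i)$, $(h_b)$, $(f_j)$ of $L^2(\Omega_1)$, $E$, $L^2(\Omega_2)$, $F$ respectively, the products $(g_a e_i)$ and $(h_b f_j)$ are orthonormal bases of $L^2(\Omega_1;E)$ and $L^2(\Omega_2;F)$, and a Parseval--Fubini computation of $\sum_{a,i,b,j} \lvert \apply{T_k(g_a e_i)}{h_b f_j} \rvert^2$ collapses to $\int_{\Omega_1\times\Omega_2} \|k(\omega_1,\omega_2)\|_{\mathscr{L}_2(E,F)}^2 \,\dx(\mu_1\times\mu_2) = \|k\|_{L^2}^2$. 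This simultaneously shows that $T_k$ is Hilbert--Schmidt with the correct norm and that $k \mapsto T_k$ is isometric; reading the computation backwards, every Hilbert--Schmidt operator on these spaces determines a kernel $k$ via its matrix coefficients, which yields surjectivity.
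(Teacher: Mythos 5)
Your main argument is precisely the paper's proof: the same chain of isometric identifications for the complete Hilbert space tensor product (the paper's $\tilde{\otimes}_\sigma$ is your $\tilde{\otimes}_2$), using $\mathscr{L}_2(H_1,H_2) \cong H_1' \tilde{\otimes} H_2$, $L^2(\Omega;H) \cong L^2(\Omega) \tilde{\otimes} H$, $L^2(\Omega_1) \tilde{\otimes} L^2(\Omega_2) \cong L^2(\Omega_1 \times \Omega_2)$, and commutativity/associativity, with the paper likewise leaving the verification that the composite is $k \mapsto T_k$ implicit (your care about conjugate duals and about testing on elementary tensors is sound but does not change the route). Your orthonormal-basis Parseval--Fubini computation is a correct, more elementary alternative the paper does not include, and it has the pleasant side effect of directly yielding the norm identity $\|T_k\|_{\mathscr{L}_2}^2 = \int_{\Omega_1 \times \Omega_2} \|k(\omega_1,\omega_2)\|_{\mathscr{L}_2(E,F)}^2 \, \dx(\omega_1,\omega_2)$ that the paper only states after the theorem.
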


\begin{proof}
	We make use of standard properties of the usual complete Hilbert space tensor
	product $\tilde{\otimes}_\sigma$ (cf.~\cite[\S 2.6]{KR97} for details) and deduce that 
	\begin{align*}
		\mathscr{L}_2(L^2(\Omega_1; E), L^2(\Omega_2; F))
			& \cong \bigl(L^2(\Omega_1) \tilde{\otimes}_\sigma E\bigr) \tilde{\otimes}_\sigma \bigl(L^2(\Omega_2) \tilde{\otimes}_\sigma F\bigr)
			\cong L^2(\Omega_1) \tilde{\otimes}_\sigma L^2(\Omega_2) \tilde{\otimes}_\sigma E \tilde{\otimes}_\sigma F \\
			& \cong L^2(\Omega_1 \times \Omega_2) \tilde{\otimes}_\sigma \mathscr{L}_2(E,F)
			\cong L^2(\Omega_1 \times \Omega_2; \mathscr{L}_2(E,F))
	\end{align*}
	is a chain of isometric isomorphisms.
\end{proof}

In particular, the square of the Hilbert-Schmidt norm of an operator $T_k$ equals
\[
	\|T_k\|_{\mathscr{L}_2}^2 = \int_{\Omega_1\times\Omega_2} \|k(x,y)\|_{\mathscr{L}_2(E,F)}^2 \dx(x,y).
\]

\begin{rem}
	Theorem~\ref{rep2} shows that, in contrast to the scalar-valued case, not every
	operator mapping $L^1(0,1;H)$ to $L^\infty(0,1;H)$, $H$ a Hilbert space, is
	a Hilbert-Schmidt operator. This is not very surprising, though, since such operators
	are in general not even compact if $H$ is infinite-dimensional.
\end{rem}

It is natural to ask whether we can find characterizations of operators associated
with kernels in $L^p(\Omega_1 \times \Omega_2; \mathcal{A}(E,F'))$, where
$\mathcal{A}(E,F')$ is a suitable subspace of $\mathscr{L}(E,F')$, for example
a space of Schatten class operators if $E$ and $F$ are Hilbert spaces.

In the scalar case, the situation is well-understood and leads to the concept
of Hille-Tamarkin operators. It is for example possible to describe the spectrum
of such operators rather precisely due to a celebrated result by
Johnson-K\"onig-Maurey-Retherford~\cite{JKMR79}. More on this topic can
be found in~\cite[\S 6.4]{Pietsch07}.

However, we run into severe difficulties when we try to attack this question
with the above techniques involving tensor products, even in the scalar case.
Although there are natural candidates for suitable tensor products in the
$L^p$-setting, e.g.~$\tilde{\otimes}_m$ and $\tilde{\otimes}_l$
in~\cite{schsch75} and $\tilde{\otimes}_{\Delta_p}$
in~\cite[Chapter~7]{defflo93}, these are just not as
well-behaved as the projective or Hilbert space tensor product.
For example, in general $L^p(\Omega)\tilde{\otimes} H \not\cong H\tilde{\otimes} L^p(\Omega)$
for any of these tensor products, i.e., $\tilde{\otimes}$ is not commutative.

\section*{Acknowledgments}
The authors wish to thank Prof.\ Wolfgang Arendt and Prof.\ Ulf Schlotterbeck
for valuable discussions and motivating help concerning the work
on this article, and in particular for profound information about the
$p$-tensor product.

\bibliographystyle{amsplain}
\bibliography{dunfordpettis}

\providecommand{\bysame}{\leavevmode\hbox to3em{\hrulefill}\thinspace}
\providecommand{\MR}{\relax\ifhmode\unskip\space\fi MR }
\providecommand{\MRhref}[2]{%
  \href{http://www.ams.org/mathscinet-getitem?mr=#1}{#2}
}
\providecommand{\href}[2]{#2}
\begin{thebibliography}{10}

\bibitem{Ali06}
C.D. Aliprantis and O.~Burkinshaw, \emph{{Positive Operators}}, Springer,
  Dordrecht, 2006, Reprint of the 1985 original.

\bibitem{Amann01b}
H.~Amann, \emph{{Elliptic operators with infinite-dimensional state spaces}},
  J. Evol. Equ. \textbf{1} (2001), no.~2, 143--188.

\bibitem{and83}
K.T. Andrews, \emph{{The Radon-Nikodym property for spaces of operators}}, J.
  Lond. Math. Soc., II. Ser. \textbf{28} (1983), 113--122.

\bibitem{AreDipl}
W.~Arendt, \emph{{Fortsetzungsprobleme f{\"u}r Operatoren zwischen
  Banachr{\"a}umen und Banachverb{\"a}nden}}, Diplomarbeit, Universit{\"a}t
  T{\"u}bingen, 1975.

\bibitem{ArendtSurvey}
\bysame, \emph{{Semigroups and evolution equations: functional calculus,
  regularity and kernel estimates}}, {Evolutionary equations. Vol. I}, Handb.
  Differ. Equ., North-Holland, Amsterdam, 2004, pp.~1--85.

\bibitem{ABHN01}
W.~Arendt, C.J.K. Batty, M.~Hieber, and F.~Neubrander, \emph{Vector-valued
  {L}aplace transforms and {C}auchy problems}, Monographs in Mathematics,
  vol.~96, Birkh\"auser Verlag, Basel, 2001.

\bibitem{AreBuk94}
W.~Arendt and A.V. Bukhvalov, \emph{{Integral representations of resolvents and
  semigroups}}, Forum Math. \textbf{6} (1994), 111--135.

\bibitem{AreTho05}
W.~Arendt and S.~Thomaschewski, \emph{{Local operators and forms}}, Positivity
  \textbf{9} (2005), no.~3, 357--367.

\bibitem{Are81}
Wolfgang Arendt, \emph{{On the o-spectrum of regular operators and the spectrum
  of measures}}, Math. Z. \textbf{178} (1981), no.~2, 271--287.

\bibitem{AO97}
J.A. Ausekle and E.F. Oja, \emph{{Pitt's Theorem for the Lorentz and Orlicz
  Sequence Spaces}}, Mathematical Notes \textbf{61} (1997), no.~1, {16--21}.

\bibitem{Burke93}
M.R. Burke, \emph{Liftings for noncomplete probability spaces}, Papers on
  general topology and applications ({M}adison, {WI}, 1991), vol. 704, New York
  Acad. Sci., 1993, pp.~34--37.

\bibitem{Davies90}
E.B. Davies, \emph{{Heat kernels and spectral theory}}, Cambridge Tracts in
  Mathematics, vol.~92, Cambridge University Press, 1990.

\bibitem{defflo93}
A.~Defant and K.~Floret, \emph{{Tensor Norms and Operator Ideals}},
  North-Holland Mathematics Studies, vol. 176, North-Holland, Amsterdam, 1993.

\bibitem{DHP07}
R.~Denk, M.~Hieber, and J.~Pr{\"u}ss, \emph{{Optimal {$L\sp p$}-{$L\sp
  q$}-estimates for parabolic boundary value problems with inhomogeneous
  data}}, Math. Z. \textbf{257} (2007), no.~1, 193--224.

\bibitem{diefouswa08}
J.~Diestel, J.H. Fourie, and J.~Swart, \emph{{The metric theory of tensor
  products. Grothendieck's r{\'e}sum{\'e} revisited}}, Am. Math. Soc.,
  Providence, RI, 2008.

\bibitem{DunPet40}
N.~Dunford and B.J. Pettis, \emph{{Linear operations on summable functions}},
  Trans. Am. Math. Soc. \textbf{47} (1940), 323--392.

\bibitem{Flo74}
K.~Floret, \emph{{Der Satz von Dunford-Pettis und die Darstellung von Massen
  mit Werten in lokalkonvexen R{\"a}umen}}, Mathematische Annalen \textbf{208}
  (1974), 203--212.

\bibitem{Frem2}
D.H. Fremlin, \emph{{Broad Foundations}}, Measure Theory, vol.~2, Colchester,
  Torres Fremlin, 2001.

\bibitem{Frem3}
\bysame, \emph{{Measure Algebras}}, Measure Theory, vol.~3, Colchester, Torres
  Fremlin, 2002.

\bibitem{Frem41}
\bysame, \emph{{Topological Measure Spaces}}, Measure Theory, vol. 4, part I,
  Colchester, Torres Fremlin, 2003.

\bibitem{Gel38}
I.~Gelfand, \emph{Abstrakte funktionen und lineare operatoren}, Rec. Math.
  Moscou, n. Ser. \textbf{4} (1938), 235--284.

\bibitem{Tulcea69}
A.~Ionescu~Tulcea and C.~Ionescu~Tulcea, \emph{{Topics in the Theory of
  Lifting}}, Ergebnisse der Mathematik und ihrer Grenzgebiete, Band 48,
  Springer-Verlag New York Inc., New York, 1969.

\bibitem{JKMR79}
W.B. Johnson, H.~K{\"o}nig, B.~Maurey, and J.R. Retherford, \emph{{Eigenvalues
  of {$p$}-summing and {$l\sb{p}$}-type operators in Banach spaces}}, J. Funct.
  Anal. \textbf{32} (1979), no.~3, 353--380.

\bibitem{KR97}
R.V. Kadison and J.R. Ringrose, \emph{{Fundamentals of the theory of operator
  algebras. Vol. I}}, Graduate Studies in Mathematics, vol.~15, American
  Mathematical Society, Providence, RI, 1997, Elementary theory, Reprint of the
  1983 original.

\bibitem{KanVul37}
L.V. Kantorovich and B.Z. Vulikh, \emph{{Sur la repr{\'e}sentation des
  op{\'e}rations lin{\'e}aires}}, Compos. Math. \textbf{5} (1937), 119--165.

\bibitem{Koethe79}
G.~K{\"o}the, \emph{{Topological Vector Spaces II}}, Springer, 1979.

\bibitem{Kuch05}
P.~Kuchment, \emph{{Quantum graphs II. Some spectral properties of quantum and
  combinatorial graphs}}, J. Phys. A \textbf{38} (2005), no.~22, 4887--4900.

\bibitem{LinTza77}
J.~Lindenstrauss and L.~Tzafriri, \emph{{Classical Banach spaces I}},
  Ergebnisse der Mathematik und ihrer Grenzgebiete, vol.~92, Springer-Verlag,
  1977.

\bibitem{Pietsch07}
A.~Pietsch, \emph{{History of Banach spaces and linear operators}},
  Birkh{\"a}user Boston Inc., Boston, MA, 2007.

\bibitem{Ros69}
H.P. Rosenthal, \emph{{On quasi-complemented subspaces of Banach spaces, with
  an appendix on compactness of operators from {$L\sp{p}\,(\mu )$} to
  {$L\sp{r}\,(\nu )$}}}, J. Functional Analysis \textbf{4} (1969), 176--214.

\bibitem{schsch75}
H.H. Schaefer and U.~Schlotterbeck, \emph{{On the approximation of kernel
  operators by operators of finite rank}}, J. Approximation Theory (1975),
  33--39.

\bibitem{Schaefer99}
H.H. Schaefer and M.P.H. Wolff, \emph{{Topological Vector Spaces}}, Springer,
  1999.

\bibitem{Schlotterbeck}
U.~Schlotterbeck, \emph{{Tensorprodukte von Banachverb{\"a}nden und positive
  Operatoren}}, Habilitationsschrift, Universit{\"a}t T{\"u}bingen, 1977.

\bibitem{vBL05}
J.~von Below and J.A. Lubary, \emph{{The eigenvalues of the Laplacian on
  locally finite networks}}, Results Math. \textbf{47} (2005), 199--225.

\end{thebibliography}

\end{document}